\numberwithin{equation}{section}
\newtheorem{theorem}{Theorem}
\newtheorem{definition}{Definition}
\newtheorem{remark}{Remark}
\newtheorem{lemma}{Lemma}
\newtheorem{proposition}{Proposition}
\def \Vh0{\stackrel{\circ}{V}_h}
\newcommand{\lc}
{\mathrel{\raise2pt\hbox{${\mathop<\limits_{\raise1pt\hbox
{\mbox{$\sim$}}}}$}}}
\newcommand{\gc}
{\mathrel{\raise2pt\hbox{${\mathop>\limits_{\raise1pt\hbox{\mbox{$\sim$}}}}$}}}
\newcommand{\ec}
{\mathrel{\raise2pt\hbox{${\mathop=\limits_{\raise1pt\hbox{\mbox{$\sim$}}}}$}}}
\def\bb{\begin{equation}} \def\ee{\end{equation}}
\def\beqn{\begin{eqnarray}}  \def\eqn{\end{eqnarray}}
\def\beqnx{\begin{eqnarray*}} \def\eqnx{\end{eqnarray*}}
\def\bn{\begin{enumerate}} \def\en{\end{enumerate}}
\def\bd{\begin{description}} \def\ed{\end{description}}
\begin{document}

\title{Iteratively Linearized Reweighted Alternating Direction Method of Multipliers for a Class of Nonconvex Problems}
\author{Tao Sun\thanks{
Department of Mathematics, National University of Defense Technology,
Changsha, 410073, Hunan,  China. Email: \texttt{nudtsuntao@163.com} } \and Hao Jiang\thanks{College of Computer, National University of Defense Technology,
Changsha, 410073, Hunan,  China. Email: \texttt{haojiang@nudt.edu.cn}}
\and Lizhi Cheng$^{*}$\thanks{The State Key Laboratory for High Performance Computation, National University of Defense Technology,
Changsha, 410073, Hunan,  China. Email: \texttt{clzcheng@nudt.edu.cn}}
\and Wei Zhu\thanks{Hunan Key Laboratory for Computation and Simulation in Science and Engineering, School of Mathematics and Computational Science, Xiangtan University, Xiangtan, Hunan, 411105, China.  Email: \texttt{zhuwei@xtu.edu.cn}}}

%
%\centering
%  \includegraphics[width=3in]{st.eps}

\maketitle

\begin{abstract}
In this paper, we consider solving a class of nonconvex and nonsmooth problems frequently appearing in signal processing and machine learning research. The traditional alternating direction method of multipliers encounters troubles in both mathematics and computations  in solving the nonconvex and nonsmooth subproblem. In view of this, we propose a reweighted alternating direction method of multipliers. In this algorithm,  all subproblems are  convex and easy to  solve. We also provide several guarantees for the  convergence and prove that  the algorithm globally converges to a critical point of an auxiliary function with the help of the Kurdyka-{\L}ojasiewicz property. Several numerical results are presented to demonstrate the efficiency of the proposed algorithm.
\end{abstract}
\textbf{Keywords: Alternating direction method of multipliers; Iteratively reweighted algorithm; Nonconvex and nonsmooth minimization; Kurdyka-{\L}ojasiewicz property; Semi-algebraic functions}

\textbf{Mathematical Subject Classification} 90C30, 90C26, 47N10
\section{Introduction}
Minimization of composite functions with  linear constrains finds various applications in signal and image processing, statistics, machine learning, to name a few.
Mathematically, such a problem can be presented as
\begin{equation}\label{omodel}
    \min_{x,y}\{f(x)+g(y) ~~\textrm{s.t.}~~Ax+By=c\},
\end{equation}
where $A\in \mathbb{R}^{r\times M}$, $B\in \mathbb{R}^{r\times N}$, and $g$ is usually the regularization function, and $f$ is usually the loss function.

The well-known alternating direction method of multipliers (ADMM) method \cite{gabay1976dual,glowinski1975approximation} is a powerful tool for the problem mentioned above. The ADMM actually focuses on the augmented Lagrangian problem of (\ref{omodel}) which reads as
\begin{align}
&\mathcal{\widetilde{L}}_{\alpha}(x,y,p):=f(x)+g(y)+\langle p,Ax+By-c\rangle+\frac{\alpha}{2}\|Ax+By-c\|_2^2,
\end{align}
where $\alpha>0$ is a parameter. The ADMM   minimizes only one variable and fixes others in each iteration; the variable $p$ is updated by a feedback strategy. Mathematically, the standard ADMM method can be presented as
 \begin{eqnarray}\label{oscheme}
 \left\{\begin{array}{lcl}
          y^{k+1}&=&\textrm{arg}\min_{y} \mathcal{\widetilde{L}}_{\alpha}(x^k,y,p^k) \\
          x^{k+1}&=&\textrm{arg}\min_{x} \mathcal{\widetilde{L}}_{\alpha}(x,y^{k+1},p^k) \\
          p^{k+1}&=&p^k+\alpha(Ax^{k+1}+By^{k+1}-c)
        \end{array}
 \right.
\end{eqnarray}

The ADMM algorithm attracts increasing attention for its  efficiency in dealing with sparsity-related problems \cite{yang2010fast,wen2010alternating,xu2012alternating,ng2010solving,wang2008new}. Obviously, the ADMM has a self-explanatory assumption; all the subproblems shall be solved efficiently. In fact, if the proximal maps  of the $f$ and $g$ are easy to  calculate, the  linearized  ADMM \cite{chen1994proximal} proposes the linearized technique to solve the subproblem efficiently; the subproblems all need to compute  proximal map of $f$ or $g$ once. The core part of the linearized ADMM lies in linearizing the quadratic terms $\frac{\alpha}{2}\|Ax+By^k-c\|_2^2$ and $\frac{\alpha}{2}\|Ax^{k+1}+By-c\|_2^2$ in each iteration. The linearized ADMM is also called as preconditioned ADMM in  \cite{esser2010general}; in fact, it is also a special case when $\theta=1$ in Chambolle-Pock primal dual algorithm \cite{chambolle2011first}. In the latter paper \cite{wang2014bregman}, the linearized ADMM is further generalized as the Bregman ADMM.

The convergence of the ADMM in the convex case is also well studied; numerous excellent works have made contributions to this field \cite{he20121,he2015non,hong2017linear,deng2016global}.  Recently, the ADMM algorithm is even developed for the infeasible problems \cite{liu2017new,ryu2018douglas}.  The earlier analyses  focus on the convex case, i.e., both $f$ and $g$ are all convex. But as the nonconvex penalty functions perform efficiently in applications, nonconvex ADMM is developed and studied: in paper \cite{chartrand2013nonconvex}, Chartrand and Brendt  directly used the ADMM to the group sparsity problems.  They replace the nonconvex subproblems as a class of proximal maps. Later, Ames and Hong consider applying ADMM for certain non-convex quadratic problems \cite{ames2016alternating}. The convergence is also presented. A class of nonconvex problems is solved by Hong et al by a provably convergent ADMM \cite{hong2016convergence}.  They also allow the subproblems to be solved inexactly by taking gradient steps which can be regarded as a linearization. Recently, with weaker assumptions, \cite{wang2015global} present new analysis for nonconvex ADMM by novel mathematical techniques. With the Kurdyka-{\L}ojasiewicz property, \cite{li2015global,li2016douglas} consider the  convergence of the generated iterative points. \cite{sun2017alternating} considered a structured constrained problem and proposed the ADMM-DC algorithm. In nonconvex ADMM literature, either the proximal maps of $f$ and $g$ or the subproblems  are assumed to be easily solved.
\subsection{Motivating example and problem formulation}
This subsection contains two parts: the first one presents an example and discusses the problems in direct using the  ADMM; the second one describes the problem considered in this paper.
\subsubsection{A motivating example: the problems in directly using ADMM}
The methods mentioned above are feasibly applicable provided the subproblems are relatively easy to solve, i.e., either the proximal maps of $f$ and $g$ or the subproblems  are assumed to be easily solved. However, the nonconvex cases may not always promise  such a  convention. We recall the  $\textrm{TV}^q_{\varepsilon}$ problem \cite{hintermuler2013nonconvex} which arises in imaging science
\begin{equation}\label{tvq}
    \min_{u}\{\frac{1}{2}\|f-\Psi u\|_2^2+\sigma\|Tu\|_{q,\varepsilon}^q\},
\end{equation}
where $T$ is the total variation operator and $\|v\|_{q,\varepsilon}^q:=\sum_{i}(|v_i|+\varepsilon)^q$. By denoting $v=Tu$, the problem then turns to being
\begin{equation}\label{tvqc}
    \min_{u,v}\{\frac{1}{2}\|f-\Psi u\|_2^2+\sigma\|v\|_{q,\varepsilon}^q,~~~\textrm{s.t.}~~Tu-v=\textbf{0}\}.
\end{equation}
 The direct ADMM for this problem can be presented as
  \begin{eqnarray}\label{tvqadmm}
  \left\{\begin{array}{lcl}
           v^{k+1}&=&\textrm{arg}\min_{v}\{\sigma \|v\|_{q,\varepsilon}^q+\langle p^k,v\rangle+\frac{\alpha}{2}\|v-Tu^k\|_2^2\}, \\
           u^{k+1}&=&\textrm{arg}\min_{u}\{ \frac{1}{2}\|f-\Psi u\|_2^2-\langle p^k,Tu\rangle+\frac{\alpha}{2}\|v^{k+1}-Tu\|_2^2\}, \\
           p^{k+1}&=&p^k+\alpha(v^{k+1}-Tu^{k+1}).
         \end{array}
  \right.
\end{eqnarray}
The first subproblem in the algorithm needs to minimize a nonconvex and nonsmooth problem. If $q=\frac{1}{2},\frac{2}{3}$, the point $v^k$ can be explicitly calculated. This is because the proximal map of $\|\cdot\|_{q,\varepsilon}^q$ can be easily obtained. But for other $q$, the proximal map cannot be easily derived. Thus, we may must employ  iterative algorithms to compute $v^{k+1}$. That indicates three drawbacks which cannot be ignored:

\begin{enumerate}
    \item The stopping criterion is hard to set for the nonconvexity\footnote{The convex methods usually enjoy a convergence rate.}.
    \item The error may be accumulating in the iterations due to the inexact numerical solution of the subproblem.
    \item Even the subproblem can be numerically solved without any error, the numerical solution for the subproblem is always  a critical  point rather than the ``real"  \textrm{argmin} due to the nonconvexity.
\end{enumerate}

In fact, the other penalty functions like Logistic function \cite{weston2003use}, Exponential-Type Penalty (ETP) \cite{gao2011feasible}, Geman \cite{geman1995nonlinear}, Laplace \cite{trzasko2009highly} also encounter such a problem.

\subsubsection{Optimization problem and basic assumptions}
In this paper, we consider the following problem
\begin{equation}\label{model}
    \min_{x,y} f(x)+\sum_{i=1}^{N}g[h(y_i)] ~~\textrm{s.t.}~~Ax+By=c,
\end{equation}
where $A\in \mathbb{R}^{r\times M}$,  and $f$, $g$ and $h$ satisfy the following assumptions:

\textbf{A.1} $f:\mathbb{R}^N\rightarrow \mathbb{R}$ is a   differentiable convex function with a Lipschitz continuous gradient, i.e.,
\begin{equation}
    \| \nabla f(x)-\nabla f(y)\|_2\leq L_f\| x-y\|_2, \forall x,y\in \mathbb{R}^N.
\end{equation}
And the function $f(x)+\frac{\|Ax\|_2^2}{2}$ is strongly convex with constant $\delta$.

\textbf{A.2} $h:\mathbb{R}\rightarrow \mathbb{R}$ is  convex and proximable.
%and  $\forall y\in \partial h(x)$, $\|y\|_2$ is bounded for any $x\in \textrm{dom}(h)$\footnote{For example, the function $h(\cdot)=\|\cdot\|_1$ satisfies this assumption.}.

\textbf{A.3} $g:\textrm{Im}(h)\rightarrow \mathbb{R}$ is a differentiable concave function with a Lipschitz continuous gradient whose Lipschitz
continuity modulus is bounded by $L_g>0$; that is
\begin{equation}
    \mid g'(s)-g'(t)\mid\leq L_g\mid s-t\mid,
\end{equation}
and $g'(t)>0$ when $t\in \textrm{Im}(h)$.

It is easy to see that the $\textrm{TV}^q$ problem can be regarded as a special one of (\ref{model}) if we set $g(s)=(s+\varepsilon)^{q}$ and $h(t)=|t|$. The augmented lagrange dual function of model (\ref{model}) is
\begin{align}\label{dual}
    &\mathcal{L}_{\alpha}(x,y,p)=f(x)+\sum_{i=1}^{N}g[h(y_i)]+\langle p,Ax+By-c\rangle+\frac{\alpha}{2}\|Ax+By-c\|_2^2,
\end{align}
where $\alpha>0$ is a parameter.
\subsection{Linearized ADMM  meets the iteratively reweighted strategy: convexifying the subproblems}
In this part, we present the algorithm for solving problem (\ref{model}). The term $\sum_{i=1}^{N}g[h(y_i)]$ has a deep relationship with several iteratively reweighted style algorithms \cite{chartrand2008iteratively,daubechies2010iteratively,sun2017global,zhang2010analysis,lu2014proximal}. Although the function $\sum_{i=1}^{N}g[h(y_i)]$ may be nondifferentiable itself, the reweighted style methods still propose an elegant way: linearization of outside function $g$. Precisely, in $(k+1)$-th iteration of the iteratively reweighted style algorithms, the term $\sum_{i=1}^{N}g[h(y_i)]$ is usually replaced by $\sum_{i=1}^{N}g'[h(y_i^k)]\cdot[h(y_i)-h(y_i^k)]+\sum_{i=1}^{N}g[h(y_i^k)]$, where $y^k$ is obtained in the $k$-th iteration. The extensions of reweighted style methods  to matrix cases are considered and analyzed in \cite{lu2015generalized,lu2017ell,lu2015smoothed,sun2017convergence,lu2016nonconvex}. In fact, the  iteratively reweighted technique is a special majorization minimization technique, which  has also been adopted in ADMM \cite{lu2017unified}. Compared with \cite{lu2017unified}, the most difference in our paper is the exploiting the specific  structure of the problem in nonconvex settings.  Motivated by the iteratively reweighted strategy, we propose the following scheme for solving (\ref{model})
\begin{eqnarray}\label{scheme}
\left\{
\begin{array}{lcl}
  y^{k+1}&=&\textrm{arg}\min_{y} \{\sum_{i=1}^{N}g'[h(y^{k}_i)]h(y_i)+\langle p^k+\alpha(Ax^k+By^k-c), By\rangle+\frac{r}{2}\|y-y^{k+1}\|_2^2\}, \\
  x^{k+1}&=&\textrm{arg}\min_{x}\{f(x)+\langle p^k, Ax\rangle+\frac{\alpha}{2}\|Ax+By^{k+1}-c\|_2^2\}, \\
  p^{k+1}&=&p^k+\alpha(Ax^{k+1}+y^{k+1}-c).
\end{array}
\right.
\end{eqnarray}
We combined both linearized ADMM and reweighted algorithm in the new scheme: for the  nonconvex part $\sum_{i=1}^{N}g[h(y_i)]$, we linearize the outside function $g$ and keep $h$, which aims to derive the convexity of the subproblem; for the quadratic part $\frac{\alpha}{2}\|Ax^{k+1}+By-c\|_2^2$,  linearization is for the use of the proximal map of $h$. We call this new algorithm as  Iteratively Linearized Reweighted Alternating Direction Method of Multipliers (ILR-ADMM). It is easy to see that each subproblem just needs to solve a convex problem in this scheme. With the expression of proximal maps, updating $y^{k+1}$ can be equivalently presented as the following forms
\small
 \begin{eqnarray}\label{schemeprox}
  y^{k+1}_i=\textbf{prox}_{\frac{g'[h(y^{k}_i)]}{r}h}(y^{k}_i-\frac{B^{\top}_i(\alpha(Ax^k+By^{k}-c)+p^k)}{r}),
\end{eqnarray}
\normalsize
where $i\in [1,2,\ldots,N]$, and $B_i$ denotes the $i$-th column of the matrix $B$. In many applications, $f$ is the quadratic function, and then solving $x^{k+1}$ is also very easy. With this form, the algorithm can be programmed with the absence of the inner loop.
\begin{algorithm}
\caption{Iteratively Linearized Reweighted Alternating Direction Method of Multipliers (ILR-ADMM)}
\begin{algorithmic}\label{alg1}
\REQUIRE   parameters $\alpha>0,r>0$\\
\textbf{Initialization}: $x^0,y^{0},p^0$\\
\textbf{for}~$k=0,1,2,\ldots$ \\
~~~ $y^{k+1}_i=\textbf{prox}_{\frac{g'[h(y^{k}_i)]}{r}h}(y^{k}_i-\frac{B^{\top}_i(\alpha(Ax^k+By^{k}-c)+p^k)}{r}), i\in [1,2,\ldots,N]$, \\
~~~$x^{k+1}=\textrm{arg}\min_{x}\{f(x)+\langle p^k, Ax\rangle+\frac{\alpha}{2}\|Ax+By^{k+1}-c\|_2^2\}$, \\
~~~$p^{k+1}=p^k+\alpha(Ax^{k+1}+By^{k+1}-c)$ \\
\textbf{end for}\\
\end{algorithmic}
\end{algorithm}
\subsection{Contribution and Organization}
In this paper, we consider  a class of nonconvex and nonsmooth problems which are ubiquitous in applications. Direct use of ADMM algorithms will   lead to troubles in both computations  and mathematics for the nonconvexity of the subproblem. In view of this, we propose  the iteratively  linearized reweighted  alternating direction method of multipliers for these problems. The new algorithm is a combination of iteratively reweighted strategy and the linearized ADMM. All the subproblems in the proposed algorithm are  convex and easy to solve if the proximal map of $h$ is easy to solve and $f$ is quadratic. Compared with the direct application of  ADMM to problem (\ref{model}), we now list the advantages of the new algorithm:
\begin{enumerate}
    \item Computational perspective: each subproblem just needs to compute once proximal map of  $g$ and minimize a quadratic problem, the computational cost is low in each iteration.
    \item Practical perspective: without any inner loop, the programming is very easy.
    \item Mathematical perspective: all the subproblems is convex and exactly solved. Thus, we get ``real" \textrm{argmin} everywhere, which makes the mathematical convergence analysis solid and meaningful.
\end{enumerate}
With the help of the Kurdyka-{\L}ojasiewicz property, we provide the convergence results of the algorithm with proper selections of the parameters.  The applications of the new algorithm  to the signal and image processing are presented. The numerical results demonstrate the efficiency of the proposed algorithm.

The rest of this paper is organized as follows. Section 2 introduces the preliminaries including the definitions of subdifferential and the  Kurdyka-{\L}ojasiewicz property.  Section 3   provides the convergence analysis. The core part is using an auxiliary  Lyapunov function and  bounding the generated sequence. Section 4 applies the proposed algorithm to image deblurring. And several comparisons are reported. Finally, Section 5 concludes the paper.
\section{Preliminaries}
We introduce the basic tools in the analysis: the subdifferential and Kurdyka-{\L}ojasiewicz property. These two definitions play important roles in the variational analysis.
\subsection{Subdifferential}
Given a lower semicontinuous function $J: \mathbb{R}^N\rightarrow (-\infty,+\infty]$, its domain is defined by
$$\textrm{dom} (J):=\{x\in \mathbb{R}^N: J(x)<+\infty\}.$$
The graph of a real extended valued function $J: \mathbb{R}^N \rightarrow (-\infty, +\infty]$ is defined by
$$\textrm{graph} (J):=\{(x,v)\in\mathbb{R}^N\times \mathbb{R}: v=J(x)\}.$$
Now, we are prepared to present the definition  of subdifferential. More details can be found in  \cite{rockafellar2009variational}.
\begin{definition}Let  $J: \mathbb{R}^N \rightarrow (-\infty, +\infty]$ be a proper and lower semicontinuous function.
\begin{enumerate}
  \item For a given $x\in \textrm{dom} (J)$, the Fr$\acute{e}$chet subdifferential of $J$ at $x$, written as $\hat{\partial}J (x)$, is the set of all vectors $u\in \mathbb{R}^N$ satisfying
  $$\lim_{y\neq x}\inf_{y\rightarrow x}\frac{J(y)-J(x)-\langle u, y-x\rangle}{\|y-x\|_2}\geq 0.$$
When $x\notin \textrm{dom} (J)$, we set $\hat{\partial}J(x)=\emptyset$.

\item The (limiting) subdifferential, or simply the subdifferential, of $J$ at $x\in  \textrm{dom} (J)$, written as $\partial J(x)$, is defined through the following closure process
\begin{align*}
\partial J(x):=\{u\in\mathbb{R}^N: \exists x^k\rightarrow x, J(x^k)\rightarrow J(x)~\textrm{and}~ u^k\in \hat{\partial}J(x^k)\rightarrow u~\textrm{as}~k\rightarrow \infty\}
\end{align*}
\end{enumerate}
\end{definition}
Note that if $x\notin \textrm{dom} (J)$,  $\partial J(x)=\emptyset$. When $J$ is convex,  the definition agrees with the  subgradient in convex analysis \cite{rockafellar2015convex} which is defined as
$$\partial J(x):=\{v\in \mathbb{R}^N: J(y)\geq J(x)+\langle v,y-x\rangle~~\textrm{for}~~\textrm{any}~~y\in \mathbb{R}^N\}.$$
It is easy to verify that the Fr$\acute{e}$chet subdifferential is convex and closed while the subdifferential is closed. Denote that
$$\textrm{graph} (\partial J):=\{(x,v)\in\mathbb{R}^N\times \mathbb{R}^N: v\in \partial J(x)\},$$
thus, $\textrm{graph} (\partial J)$ is a closed set.
Let $\{(x^k,v^k)\}_{k\in \mathbb{N}}$ be a sequence in $\mathbb{R}^N\times \mathbb{R}$ such that $(x^k,v^k)\in \textrm{graph }(\partial J)$. If $(x^k,v^k)$  converges to $(x, v)$ as $k\rightarrow +\infty$ and $J(x^k)$ converges to $v$ as $k\rightarrow +\infty$, then $(x, v)\in \textrm{graph }(\partial J)$. This indicates the following simple proposition.
\begin{proposition}\label{sublimit}
If $\{x^k\}_{k=0,1,2,\ldots}\subseteq \textrm{dom}(J)$, $v^k\in \partial J(x^k)$, $\lim_{k}v^k=v$, $\lim_{k}x^k=x\in \textrm{dom} (J)$, and $\lim_{k} J(x^k)=J(x)$\footnote{If $J$ is continuous, the condition $\lim_{k} J(x^k)=J(x)$ certainly holds if $\lim_{k}x^k=x\in \textrm{dom} (J)$.}. Then, we have
\begin{equation}
    v\in \partial J(x).
\end{equation}
\end{proposition}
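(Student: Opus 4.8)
The plan is to derive the claim directly from the definition of the limiting subdifferential through a diagonal argument. The essential point is that the hypothesis supplies only $v^k\in\partial J(x^k)$, i.e. \emph{limiting} subgradients, whereas the definition of $\partial J(x)$ is a closure process built from the \emph{Fr\'echet} subgradients $\hat{\partial}J$. So I cannot simply invoke the definition on the sequence $\{(x^k,v^k)\}$; I must first replace each $v^k$ by a nearby Fr\'echet subgradient at a nearby point. By the closure process defining $\partial J$, for each fixed $k$ there exist sequences $\{z^{k,j}\}_j$ and $\{w^{k,j}\}_j$ with $w^{k,j}\in\hat{\partial}J(z^{k,j})$ such that, as $j\to\infty$, one has $z^{k,j}\to x^k$, $J(z^{k,j})\to J(x^k)$, and $w^{k,j}\to v^k$.

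First I would extract a diagonal subsequence. For each $k$, choose an index $j(k)$ large enough that simultaneously $\|z^{k,j(k)}-x^k\|_2\le 1/k$, $|J(z^{k,j(k)})-J(x^k)|\le 1/k$, and $\|w^{k,j(k)}-v^k\|_2\le 1/k$; this is possible because all three quantities tend to $0$ in $j$. Setting $\tilde{x}^k:=z^{k,j(k)}$ and $\tilde{u}^k:=w^{k,j(k)}$, one obtains $\tilde{u}^k\in\hat{\partial}J(\tilde{x}^k)$ for every $k$.

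Next I would verify the three convergences required by the definition of $\partial J(x)$ using the triangle inequality together with the standing hypotheses $x^k\to x$, $v^k\to v$, and $J(x^k)\to J(x)$. Indeed $\|\tilde{x}^k-x\|_2\le 1/k+\|x^k-x\|_2\to 0$, $\|\tilde{u}^k-v\|_2\le 1/k+\|v^k-v\|_2\to 0$, and $|J(\tilde{x}^k)-J(x)|\le 1/k+|J(x^k)-J(x)|\to 0$. Hence $\tilde{x}^k\to x$, $J(\tilde{x}^k)\to J(x)$, and $\tilde{u}^k\to v$ with $\tilde{u}^k\in\hat{\partial}J(\tilde{x}^k)$, which is exactly the condition placing $v$ in $\partial J(x)$.

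The step I expect to require the most care is the control of the function values, and it is here that the hypothesis $\lim_k J(x^k)=J(x)$ earns its keep: since $J$ is only assumed lower semicontinuous, the spatial convergence $\tilde{x}^k\to x$ by itself does \emph{not} force $J(\tilde{x}^k)\to J(x)$. The diagonal selection must therefore track the function-value gap $|J(z^{k,j})-J(x^k)|$ in tandem with the distance and the subgradient gap, after which the triangle inequality for function values closes the argument. When $J$ is continuous this subtlety evaporates, as noted in the footnote, and the extra hypothesis becomes automatic.
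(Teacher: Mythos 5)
Your diagonal argument is correct and complete: it supplies exactly the graph-closedness of $\partial J$ under $J$-attentive convergence that the paper simply asserts in the paragraph preceding the proposition (the paper gives no further proof, treating it as an immediate consequence of the closure-process definition). Your observation that the hypothesis only provides limiting subgradients, so one must first pass to nearby Fr\'echet subgradients before invoking the definition, and that the function-value condition $J(x^k)\to J(x)$ is what makes the triangle-inequality step close, is precisely the content the paper leaves implicit.
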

A necessary condition for $x\in\mathbb{R}^N$ to be a minimizer of $J(x)$ is
\begin{equation}\label{Fermat}
\textbf{0}\in \partial J(x).
\end{equation}
When $J$ is convex, (\ref{Fermat}) is also sufficient.
\begin{definition}
A point that satisfies (\ref{Fermat}) is called (limiting) critical point. The set of critical points of $J(x)$ is denoted by $\textrm{crit}(J)$.
\end{definition}

\begin{proposition}\label{criL}
If $(x^*,y^*,p^*)$ is a critical point of $\mathcal{L}_{\alpha}(x,y,p)$ with any $\alpha>0$, it must hold that
\begin{eqnarray}
 -B^{\top}p^{*}&\in& W^{*}\partial h(y^{*}),\nonumber\\
-A^{\top}p^{*}&=& \nabla f(x^{*}),\nonumber\\
Ax^{*}+By^{*}-c&=&\textbf{0},\nonumber
\end{eqnarray}
where $\mathcal{L}_{\alpha}(x,y,p)$ is defined in (\ref{dual}) and $W^*=\textrm{Diag}\{g'[h(y^*_i)]\}_{1\leq i\leq N}$.
\end{proposition}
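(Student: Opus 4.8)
The plan is to unpack the definition of a critical point and exploit the fact that $\mathcal{L}_{\alpha}$ is smooth in every variable except through the composite term. I would write $\mathcal{L}_{\alpha}(x,y,p)=\phi(y)+F(x,y,p)$, where $\phi(y):=\sum_{i=1}^{N}g[h(y_i)]$ carries the only nonsmoothness and $F(x,y,p):=f(x)+\langle p,Ax+By-c\rangle+\frac{\alpha}{2}\|Ax+By-c\|_2^2$ is continuously differentiable (A.1 gives $f\in C^1$). Since $(x^*,y^*,p^*)\in\textrm{crit}(\mathcal{L}_{\alpha})$ means $\textbf{0}\in\partial\mathcal{L}_{\alpha}(x^*,y^*,p^*)$, the sum rule for a $C^1$ function plus a lower semicontinuous function (exact because $F$ is smooth) lets me split the joint subdifferential into three separate partial inclusions in $x$, $y$, and $p$, using that the joint subdifferential of $\phi$ in the product space is $\{\textbf{0}\}\times\partial\phi(y)\times\{\textbf{0}\}$.

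The two smooth partials are immediate. Differentiating $F$ in $p$ gives $\nabla_p\mathcal{L}_{\alpha}=Ax+By-c$, so the $p$-inclusion forces $Ax^*+By^*-c=\textbf{0}$, the third identity. Differentiating in $x$ gives $\nabla_x\mathcal{L}_{\alpha}=\nabla f(x)+A^{\top}p+\alpha A^{\top}(Ax+By-c)$; evaluating at the critical point and substituting the constraint $Ax^*+By^*-c=\textbf{0}$ annihilates the augmentation term and leaves $-A^{\top}p^*=\nabla f(x^*)$, the second identity.

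The real work is the $y$-inclusion. The smooth part contributes $\nabla_y F=B^{\top}p+\alpha B^{\top}(Ax+By-c)$, so after again using $Ax^*+By^*-c=\textbf{0}$ the inclusion collapses to $-B^{\top}p^*\in\partial\phi(y^*)$. It remains to evaluate $\partial\phi(y^*)$. Because $\phi$ is separable in the coordinates $y_i$, its subdifferential is the Cartesian product of the scalar subdifferentials $\partial_{y_i}\big(g[h(y_i)]\big)(y_i^*)$. For each coordinate I would invoke the chain rule for a composition with a $C^1$ outer function: since $g$ is differentiable (A.3) and $h$ is convex, hence locally Lipschitz (A.2), the composite $g\circ h$ is locally Lipschitz and $\partial(g\circ h)(y_i^*)=g'[h(y_i^*)]\,\partial h(y_i^*)$, where the positivity $g'[h(y_i^*)]>0$ from A.3 is exactly what validates pulling the positive scalar out of the subdifferential via $\partial(\lambda\psi)=\lambda\,\partial\psi$ for $\lambda>0$. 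Assembling the coordinates yields $\partial\phi(y^*)=W^*\partial h(y^*)$ with $W^*=\textrm{Diag}\{g'[h(y_i^*)]\}$, and hence $-B^{\top}p^*\in W^*\partial h(y^*)$, the first identity.

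The step I expect to need the most care is the chain rule $\partial(g\circ h)(y_i^*)=g'[h(y_i^*)]\,\partial h(y_i^*)$ with the correct weight and sign. Everything hinges on $g$ being $C^1$ with $g'$ strictly positive on $\textrm{Im}(h)$: smoothness of the outer map makes the chain rule an equality rather than merely an inclusion, and strict positivity of $g'$ guarantees the scaling factor stays positive so that the convex subdifferential of $h$ transfers across unchanged instead of flipping (which matters precisely because $g$ is concave, so $g\circ h$ need not be convex). This is a standard composition rule for the limiting subdifferential (see \cite{rockafellar2009variational}); once it is in hand, the three identities follow by the substitutions above.
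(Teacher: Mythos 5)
Your proposal is correct and follows essentially the same route as the paper: both reduce the statement to the separable sum rule for $\sum_i g[h(y_i)]$ and the chain-rule identity $\partial(g\circ h)(y_i^*)=g'[h(y_i^*)]\,\partial h(y_i^*)$, justified by the differentiability of $g$ and the convexity (hence local Lipschitz continuity) of $h$, with the $x$- and $p$-equations following from ordinary differentiation. You simply spell out the steps the paper dismisses as ``quite easy,'' including the substitution of the feasibility condition into the $x$- and $y$-inclusions and the role of $g'>0$ in pulling the scalar out of the subdifferential.
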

\begin{proof}
With [Proposition 10.5, \cite{rockafellar2009variational}], we have
\begin{equation}
     \partial(\sum_{i=1}^Ng[h(x_i)])=\partial (g[h(x_1)])\times\ldots\times\partial (g[h(x_N)])
\end{equation}
Noting that $g$ is differentiable and $h$ is convex, with direct computation, we have
\begin{equation}
    \hat{\partial} (g[h(x_i)])=g'[h(x_i)]\cdot \partial h(x_i).
\end{equation}
And more, by definition, we can obtain
\begin{equation}
    \partial (g[h(x_i)])=g'[h(x_i)]\cdot \partial h(x_i).
\end{equation}
We then prove the first equation. The second and third are quite easy.
\end{proof}
\subsection{Kurdyka-{\L}ojasiewicz function}
The domain of a subdifferential is given as
$$\textrm{dom} (\partial J):=\{x\in \mathbb{R}^N: \partial J(x)\neq \emptyset\}.$$
\begin{definition}\label{KLdefi}
(a) The function $J: \mathbb{R}^N \rightarrow (-\infty, +\infty]$ is said to have the  Kurdyka-{\L}ojasiewicz property at $\overline{x}\in \textrm{dom}(\partial J)$ if there
 exist $\eta\in (0, +\infty)$, a neighborhood $U$ of $\overline{x}$ and a continuous concave function $\varphi: [0, \eta)\rightarrow \mathbb{R}^+$ such that
\begin{enumerate}
  \item $\varphi(0)=0$.
  \item $\varphi$ is $C^1$ on $(0, \eta)$.
  \item for all $s\in(0, \eta)$, $\varphi^{'}(s)>0$.
  \item for all $x$ in $U\bigcap\{x|J(\overline{x})<J(x)<J(\overline{x})+\eta\}$, it holds
\begin{equation}
  \varphi^{'}(J(x)-J(\overline{x}))\cdot\textrm{dist}(\textbf{0},\partial J(x))\geq 1.
\end{equation}
\end{enumerate}

(b) Proper closed functions which satisfy the Kurdyka-{\L}ojasiewicz property at each point of $\textrm{dom}(\partial J)$ are called KL functions.
\end{definition}
More details can be  found in \cite{lojasiewicz1993geometrie,kurdyka1998gradients,bolte2007lojasiewicz}. In the following part of the paper, we use KL for Kurdyka-{\L}ojasiewicz  for short. Directly checking whether a function is KL or not is hard, but the proper closed semi-algebraic functions \cite{bolte2007lojasiewicz} do much help.
\begin{definition}
(a) A subset $S$ of $\mathbb{R}^N$ is a real semi-algebraic set if there exists a finite number of real polynomial functions $g_{ij}, h_{ij}:\mathbb{R}^N\rightarrow \mathbb{R}$ such that
$$S=\bigcup_{j=1}^p\bigcap_{i=1}^q\{u\in \mathbb{R}^N:g_{ij}(u)=0~\textrm{and}~~h_{ij}(u)<0\}.$$

(b) A function $h:\mathbb{R}^N\rightarrow (-\infty, +\infty]$ is called semi-algebraic if its graph
$$\{(u, t)\in \mathbb{R}^{N+1}: h(u)=t\}$$
is a semi-algebraic subset of $\mathbb{R}^{N+1}$.
\end{definition}
Better yet, the semi-algebraicity enjoys many quite nice properties and various kinds of functions are KL \cite{attouch2013convergence}. We just put a few of them here:
\begin{itemize}
\item Real closed polynomial functions.
\item Indicator functions of closed semi-algebraic sets.
\item Finite sums and product of closed semi-algebraic functions.
\item The composition of closed semi-algebraic functions.
\item Sup/Inf type function, e.g., $\sup\{g (u, v) : v \in C\}$ is semi-algebraic when $g$ is a closed semi-algebraic
function and $C$ a closed semi-algebraic set.
\item Closed-cone of PSD matrices, closed Stiefel manifolds and closed constant rank matrices.
\end{itemize}

\begin{lemma}[\cite{bolte2007lojasiewicz}]\label{semikl}
Let $J:\mathbb{R}^N\rightarrow \mathbb{R}$ be a proper and closed function. If $J$ is semi-algebraic then it satisfies the KL property
at any point of $\textrm{dom} (J)$.
\end{lemma}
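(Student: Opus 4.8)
The plan is to deduce the \textbf{KL} property from a {\L}ojasiewicz-type subgradient inequality and then package that inequality into the desingularizing function $\varphi$ of Definition \ref{KLdefi}. Fix $\overline{x}\in\textrm{dom}(\partial J)$ and, after subtracting a constant, assume $J(\overline{x})=0$. First I would show that it suffices to produce an exponent $\theta\in[0,1)$, a constant $c>0$, and a neighborhood $U$ of $\overline{x}$ such that
\[
\textrm{dist}(\textbf{0},\partial J(x))\geq c\,|J(x)|^{\theta}
\]
holds for every $x\in U$ with $0<J(x)<\eta$. Indeed, setting $\varphi(s)=\frac{1}{c(1-\theta)}s^{1-\theta}$ yields a concave $C^1$ function on $(0,\eta)$ with $\varphi(0)=0$ and $\varphi'(s)=\frac{1}{c}s^{-\theta}>0$; substituting then gives $\varphi'(J(x))\cdot\textrm{dist}(\textbf{0},\partial J(x))\geq \frac{1}{c}J(x)^{-\theta}\cdot c\,J(x)^{\theta}=1$, which is exactly condition (4) of Definition \ref{KLdefi}.

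The substance is therefore the subgradient inequality, and here I would lean on the machinery of semi-algebraic geometry. The first ingredient is the Tarski--Seidenberg principle: images and projections of semi-algebraic sets are again semi-algebraic. Since $\textrm{graph}(\partial J)$ is obtained from $\textrm{graph}(J)$ by operations (limits of Fr\'echet subgradients, closure) that preserve definability, $\textrm{graph}(\partial J)$ is semi-algebraic. Consequently the one-variable ``tract'' function
\[
\nu(t):=\inf\{\textrm{dist}(\textbf{0},\partial J(x)):\|x-\overline{x}\|_2\leq\varepsilon,\ J(x)=t\}
\]
is a semi-algebraic function of $t$. By the monotonicity theorem for semi-algebraic functions of one variable, $\nu$ is monotone near $t=0$ and admits a Puiseux-type expansion, from which one extracts $\nu(t)\geq c\,t^{\theta}$ for small $t>0$ with some $\theta<1$; this is precisely the sought inequality.

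Because $J$ need not be differentiable, the delicate point is relating $\partial J(x)$ to genuine gradients. My plan is to invoke a finite semi-algebraic (Whitney) stratification of $\textrm{graph}(J)$ compatible with $J$, so that on each stratum $J$ restricts to a $C^1$ function; on a fixed stratum the classical {\L}ojasiewicz gradient inequality applies to the restriction, while the projection formula of Bolte--Daniilidis--Lewis--Shiota identifies the tangential gradient with a selection from $\partial J(x)$. Finiteness of the stratification then permits a common exponent $\theta$ and constant $c$ on a neighborhood of $\overline{x}$. I expect this last step to be the main obstacle: the tangential (stratum) gradient a priori controls only part of $\partial J$, and one must combine the projection formula with the curve selection lemma to guarantee that the full quantity $\textrm{dist}(\textbf{0},\partial J(x))$ is bounded below uniformly near $\overline{x}$.

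Since the statement is quoted verbatim from \cite{bolte2007lojasiewicz}, a fully self-contained argument would have to reproduce this real-algebraic-geometry apparatus (Tarski--Seidenberg, the monotonicity theorem, stratification, and the projection formula), which is why the paper legitimately uses it as a citation rather than proving it. In the present setting the only functions to which Lemma \ref{semikl} will be applied are built from polynomials and the semi-algebraic pieces $h$ and $g$, so one may alternatively verify semi-algebraicity of the relevant Lyapunov function directly from the closure properties of semi-algebraic functions listed above and then quote the lemma.
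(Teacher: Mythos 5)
The paper does not prove this lemma at all: it is imported verbatim from \cite{bolte2007lojasiewicz} (together with \cite{attouch2013convergence,bolte2014proximal}), and the only ``proof obligation'' the paper ever discharges in connection with it is checking, via the listed closure properties, that the Lyapunov function $\mathcal{L}_{\alpha}$ is semi-algebraic so that the lemma applies. Your sketch is therefore not comparable to anything in the paper; it is an outline of the argument in the cited reference itself. As an outline it is faithful to that source: the reduction of the KL inequality to a {\L}ojasiewicz-type subgradient inequality with exponent $\theta\in[0,1)$ and desingularizer $\varphi(s)=\frac{1}{c(1-\theta)}s^{1-\theta}$ is exactly right (your computation $\varphi'(J(x))\cdot\textrm{dist}(\textbf{0},\partial J(x))\geq 1$ checks out), and Tarski--Seidenberg, the monotonicity/Puiseux theorem for one-variable semi-algebraic functions, and stratification with the projection formula are indeed the load-bearing ingredients.

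The one place where your plan has a genuine hole is the step ``from which one extracts $\nu(t)\geq c\,t^{\theta}$ for small $t>0$.'' The monotonicity theorem only tells you that the semi-algebraic marginal function $\nu$ is eventually monotone and admits a Puiseux expansion near $0$; it is entirely consistent with that conclusion that $\nu\equiv 0$ on an interval $(0,\eta)$, in which case no inequality of the form $\nu(t)\geq c\,t^{\theta}$ with finite $c>0$ exists. Ruling this out --- i.e., showing that there is no sequence $x^j\rightarrow\overline{x}$ with $J(x^j)\downarrow J(\overline{x})$, $J(x^j)\neq J(\overline{x})$, and $\textrm{dist}(\textbf{0},\partial J(x^j))\rightarrow 0$ --- is the actual content of the theorem, and it is supplied in \cite{bolte2007lojasiewicz} by a nonsmooth Sard-type finiteness result for the critical values of a semi-algebraic function combined with the projection formula on strata. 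You flag a related difficulty (that the stratumwise tangential gradient controls only part of $\partial J$), but the positivity of $\nu$ deserves to be named explicitly as the crux rather than folded into the expansion step. Your closing observation --- that for the purposes of this paper one only needs to verify semi-algebraicity of $\mathcal{L}_{\alpha}$ and quote the lemma --- is exactly what the authors do.
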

The previous definition and property of KL is about a certain point in $\textrm{dom}(J)$. In fact, the property has been extended to a certain closed set \cite{bolte2014proximal}. And this property makes previous convergence proofs related to KL property much easier.
\begin{lemma}\label{con}
Let $J: \mathbb{R}^N\rightarrow \mathbb{R}$ be a proper lower semi-continuous function and $\Omega$ be a compact set. If $J$ is a constant on $\Omega$ and $J$ satisfies the KL property at each point on $\Omega$, then there exists concave function $\varphi$ satisfying the four properties given in Definition \ref{KLdefi} and $\eta,\varepsilon>0$ such that for any $\overline{x}\in \Omega$ and any $x$ satisfying that $\textrm{dist}(x,\Omega)<\varepsilon$ and $f(\overline{x})<f(x)<f(\overline{x})+\eta$, it holds that
\begin{equation}
    \varphi^{'}(J(x)-J(\overline{x}))\cdot\textrm{dist}(\textbf{0},\partial J(x))\geq 1.
\end{equation}
\end{lemma}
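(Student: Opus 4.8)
The plan is to derive the uniform (set‑wise) inequality from the pointwise KL property of Definition \ref{KLdefi} by a standard compactness argument; the only genuinely delicate point is how to amalgamate the finitely many desingularizing functions produced at individual points into a \emph{single} concave $\varphi$. Throughout, let $\zeta$ denote the constant value of $J$ on $\Omega$, so that $J(\overline{x})=\zeta$ for every $\overline{x}\in\Omega$.

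First I would apply the pointwise property. For each $\overline{u}\in\Omega$, Definition \ref{KLdefi} supplies a number $\eta_{\overline{u}}>0$, an open neighborhood $U_{\overline{u}}$ of $\overline{u}$, and a concave function $\varphi_{\overline{u}}$ satisfying the four listed properties, such that the KL inequality holds on $U_{\overline{u}}\cap\{x: \zeta<J(x)<\zeta+\eta_{\overline{u}}\}$. The family $\{U_{\overline{u}}\}_{\overline{u}\in\Omega}$ is an open cover of the compact set $\Omega$, so I extract a finite subcover $U_{u_1},\ldots,U_{u_m}$ and set $V:=\bigcup_{j=1}^m U_{u_j}$ and $\eta:=\min_{1\le j\le m}\eta_{u_j}>0$. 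Because $V$ is open and contains $\Omega$, the Lipschitz map $x\mapsto \textrm{dist}(x,\mathbb{R}^N\setminus V)$ is strictly positive on $\Omega$, hence attains a positive minimum $\varepsilon>0$ on the compact set $\Omega$; this $\varepsilon$ guarantees that $\textrm{dist}(x,\Omega)<\varepsilon$ forces $x\in V$.

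The main obstacle is merging the $\varphi_{u_j}$, and I expect the naive choice $\varphi=\max_j\varphi_{u_j}$ to fail, since a pointwise maximum of concave functions need not be concave. Instead I would work at the level of derivatives. Each $\varphi_{u_j}'$ is positive, continuous and nonincreasing on $(0,\eta)$ by concavity and the $C^1$ hypothesis, and integrable near $0$ because $\int_0^s \varphi_{u_j}'(t)\,dt=\varphi_{u_j}(s)<\infty$. I define $\psi(s):=\max_{1\le j\le m}\varphi_{u_j}'(s)$ and $\varphi(s):=\int_0^s\psi(t)\,dt$ on $[0,\eta)$. Then $\psi$ is positive, continuous, nonincreasing, and dominated by $\sum_j\varphi_{u_j}'$, so $\varphi$ is well defined with $\varphi(0)=0$, is $C^1$ on $(0,\eta)$ with $\varphi'=\psi>0$, and is concave because $\varphi'=\psi$ is nonincreasing. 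Thus $\varphi$ meets all four requirements of Definition \ref{KLdefi}, and by construction $\varphi'(s)\ge\varphi_{u_j}'(s)$ for every $j$ and every $s\in(0,\eta)$.

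Finally I would conclude by substitution. Take any $\overline{x}\in\Omega$ and any $x$ with $\textrm{dist}(x,\Omega)<\varepsilon$ and $\zeta<J(x)<\zeta+\eta$. Then $x\in V$, so $x\in U_{u_{j_0}}$ for some index $j_0$, and moreover $\zeta<J(x)<\zeta+\eta\le\zeta+\eta_{u_{j_0}}$, which is exactly the admissible range for the pointwise KL inequality at $u_{j_0}$, recalling $J(u_{j_0})=\zeta$. Hence $\varphi_{u_{j_0}}'(J(x)-\zeta)\cdot\textrm{dist}(\textbf{0},\partial J(x))\ge 1$, and since $\varphi'(J(x)-\zeta)\ge\varphi_{u_{j_0}}'(J(x)-\zeta)$ while $J(\overline{x})=\zeta$, we obtain $\varphi'(J(x)-J(\overline{x}))\cdot\textrm{dist}(\textbf{0},\partial J(x))\ge 1$, as claimed. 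The hypothesis that $J$ is constant on $\Omega$ is precisely what makes the shifted argument $J(x)-J(\overline{x})$ independent of the base point $\overline{x}$, so that one fixed $\varphi$ serves all $\overline{x}\in\Omega$ at once.
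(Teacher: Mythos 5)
Your argument is correct. Note first that the paper does not actually prove this lemma: it is quoted from Bolte, Sabach and Teboulle \cite{bolte2014proximal} (their ``uniformized KL property''), so the only meaningful comparison is with the proof given there. Your compactness step (finite subcover of $\Omega$, $\eta=\min_j\eta_{u_j}$, and $\varepsilon$ taken as the minimum over the compact set $\Omega$ of the distance to the complement of $V=\bigcup_j U_{u_j}$) is exactly the standard one, and your use of the constancy of $J$ on $\Omega$ to make $J(x)-J(\overline{x})$ independent of the base point is the right observation. Where you diverge is in the merging step: you integrate the pointwise maximum of the derivatives, $\varphi(s)=\int_0^s\max_j\varphi_{u_j}'(t)\,dt$, whereas the cited reference simply takes $\varphi=\sum_{j=1}^m\varphi_{u_j}$. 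The sum is concave, $C^1$, vanishes at $0$, has positive derivative, and satisfies $\varphi'=\sum_j\varphi_{u_j}'\ge\varphi_{u_{j_0}}'$, which is all the final substitution requires; it thus sidesteps the (correctly identified) pitfall that a maximum of concave functions need not be concave, without any need to discuss continuity, monotonicity or integrability near $0$ of the maximal derivative. Your construction demands a bit more verification (all of which you supply) but produces a pointwise smaller, hence sharper, desingularizing function. Either route establishes the lemma.
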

\section{Convergence analysis}
In this part, the function $\mathcal{L}_{\alpha}(x,y,p)$ is defined in (\ref{dual}). We provide the convergence guarantee and the convergence analysis of ILR-ADMM (Algorithm 1).
We first present a sketch of the proofs, which is also a big picture for the purpose of each lemma and theorem:
\begin{itemize}
\item In the first step, we bound the dual variables by the primal points (Lemma \ref{fp}).
\item In the second step, the sufficient descent condition is derived for a new Lyapunov function (Lemma \ref{descend}).
\item In the third step, we provide several conditions to bound the points (Lemma \ref{boundness}).
\item In the fourth step, the relative error condition is proved (Lemma \ref{relative}).
\item In the last step, we prove the convergence under semi-algebraic assumption (Theorem \ref{confinal}).
\end{itemize}
The proofs in our paper are closely related to seminal papers \cite{hong2016convergence,wang2015global,li2015global} in
several proofs treatments. In fact, some proofs follow their techniques. For example, in Lemma \ref{fp}, we employ the method used in [Lemma 3, \cite{wang2015global}] to
bound $\|p^{k+1}-p^k\|_2$. In Lemma \ref{boundness},  boundedness of  the sequence is also proved by a similar way given in [Theorem 3, \cite{li2015global}].   Besides the detailed issues, in the large picture, the keystones are
also similar to \cite{hong2016convergence,wang2015global,li2015global}: we also prove the sufficient descent and subdifferential bound for a
Lyapunov function, and the boundedness of the generated points.

However, the proofs in our paper are still different from  \cite{hong2016convergence,wang2015global,li2015global} in various
aspects. The novelties mainly lay in deriving the   sufficient descent and subdifferential bound based on the specific structure of our problem. Noting that in each iteration, we minimize  $\mathcal{L}_{\alpha}^k(x^k,y,p^k)$ and $\mathcal{L}_{\alpha}^k(x,y^{k+1},p^k)$ rather than
$\mathcal{L}_{\alpha}(x^k,y,p^k)$ and $\mathcal{L}_{\alpha}(x,y^{k+1},p^k)$. Thus, the previous methods cannot be directly used in our paper. By exploiting the structure property of the problem, we built these two conditions.

\begin{lemma}\label{fp}
If \begin{equation}\label{conditionp}
    \textrm{Im}(B)\bigcup \{c\}\subseteq \textrm{Im}(A).
\end{equation}
 Then, we have
\begin{equation}
    \|p^k-p^{k+1}\|_2^2\leq \eta\|x^{k+1}-x^{k}\|_2^2,
\end{equation}
where $\eta=\frac{L_f^2}{\theta^2}$, and $\theta$ is the smallest strictly-positive eigenvalue of $(A^{\top}A)^{1/2}$.
\end{lemma}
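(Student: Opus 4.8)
The plan is to extract a clean identity relating the dual increment to the gradient of $f$ from the optimality condition of the $x$-subproblem, and then to invert $A^{\top}$ from below on the subspace $\textrm{Im}(A)$, where the hypothesis (\ref{conditionp}) guarantees the dual increment lives. First I would write down the first-order condition for $x^{k+1}$: since $x^{k+1}$ exactly minimizes the smooth convex objective $f(x)+\langle p^k,Ax\rangle+\frac{\alpha}{2}\|Ax+By^{k+1}-c\|_2^2$, its gradient vanishes, giving $\nabla f(x^{k+1})+A^{\top}p^k+\alpha A^{\top}(Ax^{k+1}+By^{k+1}-c)=\textbf{0}$. Substituting the dual update in the form $p^{k+1}-p^k=\alpha(Ax^{k+1}+By^{k+1}-c)$ collapses this to the clean relation $A^{\top}p^{k+1}=-\nabla f(x^{k+1})$, valid at every iteration. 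Differencing two consecutive instances yields $A^{\top}(p^{k+1}-p^k)=\nabla f(x^k)-\nabla f(x^{k+1})$.

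Next I would observe that (\ref{conditionp}) forces the dual increment into $\textrm{Im}(A)$: indeed $p^{k+1}-p^k=\alpha(Ax^{k+1}+By^{k+1}-c)$, and each of $Ax^{k+1}$, $By^{k+1}$ (using $\textrm{Im}(B)\subseteq\textrm{Im}(A)$) and $c$ lies in $\textrm{Im}(A)$, hence so does their combination. This is the only place the hypothesis is used, and it is exactly what allows control of $A^{\top}$ from below.

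The core estimate is then a spectral fact about $A$ restricted to its own range. For any $v\in\textrm{Im}(A)=\textrm{Im}(AA^{\top})$, decomposing $v$ in the eigenbasis of the symmetric positive semidefinite matrix $AA^{\top}$ (equivalently, an SVD argument on $A$) gives $\|A^{\top}v\|_2^2=v^{\top}AA^{\top}v\geq\theta^2\|v\|_2^2$, since the smallest nonzero eigenvalue of $AA^{\top}$ equals $\theta^2$ and the $\ker(A^{\top})$-component of $v$ vanishes precisely because $v\in\textrm{Im}(A)$. Applying this with $v=p^{k+1}-p^k$ and invoking the differenced identity together with the Lipschitz bound in \textbf{A.1},
\[
\theta\,\|p^{k+1}-p^k\|_2\leq\|A^{\top}(p^{k+1}-p^k)\|_2=\|\nabla f(x^k)-\nabla f(x^{k+1})\|_2\leq L_f\|x^{k+1}-x^k\|_2 .
\]
Squaring and dividing by $\theta^2$ delivers the claim with $\eta=L_f^2/\theta^2$.

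The main obstacle is the third step, namely establishing $\|A^{\top}v\|_2\geq\theta\|v\|_2$ only on $\textrm{Im}(A)$ and confirming that the dual increment really lands there. For a general rank-deficient $A$ the map $A^{\top}$ is not injective, so no such lower bound holds for arbitrary $v$; the entire argument hinges on (\ref{conditionp}) confining $p^{k+1}-p^k$ to the subspace on which the smallest positive singular value governs $A^{\top}$ from below. Everything else, namely the optimality condition and the telescoped identity $A^{\top}p^{k+1}=-\nabla f(x^{k+1})$, is routine.
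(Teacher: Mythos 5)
Your proposal is correct and follows essentially the same route as the paper: derive $A^{\top}p^{k+1}=-\nabla f(x^{k+1})$ from the exact optimality condition of the $x$-subproblem combined with the dual update, difference consecutive iterations, note that (\ref{conditionp}) places $p^{k+1}-p^k$ in $\textrm{Im}(A)$, and bound $A^{\top}$ from below by $\theta$ on that subspace before invoking the Lipschitz continuity of $\nabla f$. The only difference is cosmetic — you spell out the singular-value argument for $\|A^{\top}v\|_2\geq\theta\|v\|_2$ on $\textrm{Im}(A)$, which the paper leaves implicit.
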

\begin{proof}
The second step in each iteration actually gives
\begin{align}
  \nabla f(x^{k+1})=-A^{\top}(\alpha(Ax^{k+1}+By^{k+1}-c)+p^k).
\end{align}
With the expression of $p^{k+1}$,
\begin{align}
 \nabla f(x^{k+1})=-A^{\top}p^{k+1}.
\end{align}
Replacing $k+1$ with $k$, we can obtain
\begin{align}\label{remarkused}
\nabla f(x^{k})=-A^{\top}p^{k}
\end{align}
Under condition (\ref{conditionp}), $p^{k+1}-p^k\in \textrm{Im}(A)$;  and  subtraction of the two equations above gives
\begin{align}
&\|p^k-p^{k+1}\|_2\leq \frac{1}{\theta}\|A^{\top}(p^k-p^{k+1})\|_2\nonumber\\
&\quad\leq\frac{\|\nabla f(x^{k+1})-\nabla f(x^{k})\|_2}{\theta}\leq\frac{L_f}{\theta}\|x^{k+1}-x^{k}\|_2.
\end{align}
\end{proof}
\begin{remark}
If condition (\ref{conditionp}) holds and $p^0\in \textrm{Im}(A)$, we have $p^k\in \textrm{Im}(A)$. Then, from (\ref{remarkused}), we have that
\begin{align}\label{boundp}
\|p^k\|_2\leq\frac{1}{\theta}\|\nabla f(x^k)\|_2.
\end{align}
We will use this inequality in bounding the sequence.
\end{remark}

\begin{remark}
The condition (\ref{conditionp}) is    satisfied  if $A$ is surjective. However, in many applications, the matrix $A$  may fail to be surjective. For example, for  a matrix $U\in \mathbb{R}^{N\times N}$, we consider the operator
\begin{equation}
    \mathcal{T}(U)=(DU,UD^{\top})\in \mathbb{R}^{(N-1)N}\times\mathbb{R}^{N(N-1)},
\end{equation}
where $D\in \mathbb{R}^{(N-1)\times N}$ is the forward difference operator. Noting the $\textrm{dim}(\textrm{Im}(\mathcal{T}))=2N(N-1)>N^2=\textrm{dim}(\textrm{dom}(\mathcal{T}))$ when $N>2$, thus, $\mathcal{T}$ cannot be  surjective in this case. However, the current convergence of nonconvex ADMM is all based on the surjective assumption on $A$ or condition (\ref{conditionp}), which is also used in our analysis.  How to remove condition (\ref{conditionp}) in the nonconvex ADMM deserves further research.
\end{remark}

 Now, we introduce several notation to  present the following lemma. Denote the variable  $d$ and the sequence $d^k$ as
\begin{equation}
    d:=(x,y,p), d^k:=(x^k,y^k,p^k), z^k:=(x^k,y^k).
\end{equation}
An auxiliary function is always used in the proof
 \begin{align}
    \mathcal{L}_{\alpha}^k(x,y,p)&:=f(x)+\sum_{i=1}^{N}g'[h(y_i^k)]h(y_i)+\langle p,Ax+By-c\rangle+\frac{\alpha}{2}\|Ax+By-c\|_2^2.
\end{align}

\begin{lemma}[Descent]\label{descend}
Let the sequence $\{(x^k,y^k,p^k)\}_{k=0,1,2,\ldots}$ be generated by ILR-ADMM. If condition (\ref{conditionp}) and the following condition
\begin{equation}\label{condition}
    \alpha>\max\{1,\frac{2\eta}{\delta}\},r>\alpha\|B\|_2^2
\end{equation}
hold, then there exists $\nu>0$ such that
\begin{equation}
    \mathcal{L}_{\alpha}(d^k)-\mathcal{L}_{\alpha}(d^{k+1})\geq\nu\|z^{k+1}-z^k\|_2^2,
\end{equation}
where $\mathcal{L}_{\alpha}(d^k)=\mathcal{L}_{\alpha}(x^k,y^k,p^k)$.
\end{lemma}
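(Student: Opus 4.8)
The plan is to split the one-step difference $\mathcal{L}_\alpha(d^k)-\mathcal{L}_\alpha(d^{k+1})$ into a \emph{primal} part, coming from the joint update of $x$ and $y$ at the fixed multiplier $p^k$, and a \emph{dual} part, coming from the update of $p$. The crux is that the iterates are produced by minimizing the surrogate $\mathcal{L}_\alpha^k$ rather than $\mathcal{L}_\alpha$ itself, so I would first prove descent in $\mathcal{L}_\alpha^k$ over the two primal steps and then transfer it to $\mathcal{L}_\alpha$ through the concavity of $g$.

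For the $y$-step, the subproblem in (\ref{scheme}) minimizes the function obtained from $\mathcal{L}_\alpha^k(x^k,\cdot,p^k)$ by replacing the quadratic penalty $\frac{\alpha}{2}\|Ax^k+B\,\cdot-c\|_2^2$ with its linearization at $y^k$ together with the proximal term $\frac{r}{2}\|\cdot-y^k\|_2^2$. Since $g'[h(y_i^k)]>0$ by \textbf{A.3} and $h$ is convex by \textbf{A.2}, this surrogate is $r$-strongly convex, so $y^{k+1}$ obeys a quadratic growth bound. Writing $u=Ax^k+By^k-c$ and $w=B(y^{k+1}-y^k)$ and expanding the squares, I would turn this into a genuine decrease of $\mathcal{L}_\alpha^k(x^k,\cdot,p^k)$ of size $(r-\frac{\alpha}{2}\|B\|_2^2)\|y^{k+1}-y^k\|_2^2$, which is positive once $r>\alpha\|B\|_2^2$. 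For the $x$-step, the subproblem minimizes $\mathcal{L}_\alpha^k(\cdot,y^{k+1},p^k)$, whose $x$-dependence is only through $f(x)+\frac{\alpha}{2}\|Ax+\text{const}\|_2^2$; because $\alpha>1$ and $f(x)+\frac{1}{2}\|Ax\|_2^2$ is $\delta$-strongly convex by \textbf{A.1}, this objective is $\delta$-strongly convex in $x$ and the step decreases it by at least $\frac{\delta}{2}\|x^{k+1}-x^k\|_2^2$.

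To pass back from $\mathcal{L}_\alpha^k$ to $\mathcal{L}_\alpha$ I would invoke the tangent inequality for the concave differentiable $g$, namely $g[h(y_i^{k+1})]\le g[h(y_i^k)]+g'[h(y_i^k)](h(y_i^{k+1})-h(y_i^k))$. Summed over $i$, this shows that the gap $\mathcal{L}_\alpha-\mathcal{L}_\alpha^k$ (which lives only in the $g$-terms) changes in the favorable direction across the step, giving $\mathcal{L}_\alpha(x^k,y^k,p^k)-\mathcal{L}_\alpha(x^{k+1},y^{k+1},p^k)\ge \mathcal{L}_\alpha^k(x^k,y^k,p^k)-\mathcal{L}_\alpha^k(x^{k+1},y^{k+1},p^k)$, so the primal descent carries over to the true Lagrangian. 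The dual part is the identity $\mathcal{L}_\alpha(x^{k+1},y^{k+1},p^k)-\mathcal{L}_\alpha(x^{k+1},y^{k+1},p^{k+1})=-\frac{1}{\alpha}\|p^{k+1}-p^k\|_2^2$, which Lemma \ref{fp} bounds below by $-\frac{\eta}{\alpha}\|x^{k+1}-x^k\|_2^2$. Summing the two contributions yields
\begin{equation*}
\mathcal{L}_\alpha(d^k)-\mathcal{L}_\alpha(d^{k+1})\ge \Big(r-\tfrac{\alpha}{2}\|B\|_2^2\Big)\|y^{k+1}-y^k\|_2^2+\Big(\tfrac{\delta}{2}-\tfrac{\eta}{\alpha}\Big)\|x^{k+1}-x^k\|_2^2,
\end{equation*}
and the choice $\nu=\min\{r-\frac{\alpha}{2}\|B\|_2^2,\ \frac{\delta}{2}-\frac{\eta}{\alpha}\}$ closes the argument, the second coefficient being positive exactly because $\alpha>2\eta/\delta$.

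The step I expect to be the main obstacle is the transfer from $\mathcal{L}_\alpha^k$ to $\mathcal{L}_\alpha$: descent is only directly available for the surrogate, and one must check that the two linearizations at work — the linearization of the quadratic penalty in the $y$-step and the concave majorization of $g$ — combine consistently so that the surrogate decrease lower-bounds the true decrease. The concavity inequality in \textbf{A.3} is precisely what makes the reweighting compatible with monotone descent of $\mathcal{L}_\alpha$; without it the sign could go the wrong way. The remaining care is purely in tracking the coefficients and matching them to condition (\ref{condition}).
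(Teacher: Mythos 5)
Your argument is correct and follows essentially the same route as the paper's proof: descent of the surrogate $\mathcal{L}_{\alpha}^k$ over the two primal steps, transfer to $\mathcal{L}_{\alpha}$ via the tangent inequality for the concave $g$, and control of the dual increase through Lemma \ref{fp}. The only difference is that you additionally invoke the $r$-strong convexity of the $y$-subproblem to obtain the slightly larger coefficient $r-\frac{\alpha}{2}\|B\|_2^2$ where the paper uses only minimality and gets $\frac{r-\alpha\|B\|_2^2}{2}$; both are positive under condition (\ref{condition}), so the conclusion is unaffected.
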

\begin{proof}
Direct calculation shows that the first step is actually minimizing the function $\mathcal{L}_{\alpha}^k(x^k,y,p^k)+\frac{(y-y^k)^{\top}(r_1 \mathbb{I}-\alpha B^{\top}B)(y-y^k)}{2}$ with respect to $y$. Thus, we have
\small
\begin{align}
&\mathcal{L}_{\alpha}^k(x^k,y^{k+1},p^k)+\frac{r-\alpha\|B\|_2^2}{2}\|y^{k+1}-y^k\|_2^2\nonumber\\
    &\quad\leq\mathcal{L}_{\alpha}^k(x^k,y^{k+1},p^k)+\frac{(y^{k+1}-y^k)^{\top}(r \mathbb{I}-\alpha B^{\top}B)(y^{k+1}-y^k)}{2}\leq \mathcal{L}_{\alpha}^k(x^k,y^{k},p^k).\nonumber
\end{align}
\normalsize
Similarly, $x^{k+1}$ actually minimizes $\mathcal{L}_{\alpha}^k(x^k,y,p^k)$. Noting $\alpha\geq 1$, with assumption \textbf{A.1}, the strongly convex constant of $\mathcal{L}_{\alpha}^k(x^k,y,p^k)$ is larger than $\delta$,
\small
\begin{align}
        \mathcal{L}_{\alpha}^k(x^{k+1},y^{k+1},p^k)&+\frac{\delta}{2}\|x^{k+1}-x^k\|_2^2\leq \mathcal{L}_{\alpha}^k(x^k,y^{k+1},p^k).
\end{align}
\normalsize
Direct calculation yields
\begin{align}
     &\mathcal{L}_{\alpha}^k(x^{k+1},y^{k+1},p^{k+1})=\mathcal{L}_{\alpha}^k(x^{k+1},y^{k+1},p^{k})+\langle p^{k+1}-p^k,Ax^{k+1}+By^{k+1}-c\rangle\nonumber\\
     &\quad\quad=\mathcal{L}_{\alpha}^k(x^{k+1},y^{k+1},p^{k})+\frac{1}{\alpha}\|p^{k+1}-p^k\|_2^2.
\end{align}
Combining the equations above, we can have
\small
\begin{align}\label{l1t1}
    &\mathcal{L}_{\alpha}^k(x^k,y^{k},p^k)\geq \mathcal{L}_{\alpha}^k(x^{k+1},y^{k+1},p^{k+1})+\frac{\delta}{2}\|x^{k+1}-x^k\|_2^2+\frac{r-\alpha\|B\|_2^2}{2}\|y^{k+1}-y^k\|_2^2-\frac{1}{\alpha}\|p^{k+1}-p^k\|_2^2.
\end{align}
\normalsize
Noting $g$ is concave, we have
\begin{align}\label{des1temp}
    &\sum_{i=1}^{N}g[h(y^k_i)]-\sum_{i=1}^{N}g[h(y^{k+1}_i)]\nonumber\\
    &\quad\quad=\sum_{i=1}^{N}\{g[h(y^k_i)]-g[h(y^{k+1}_i)]\}\nonumber\\
    &\quad\quad\geq\sum_{i=1}^{N}g'[h(y^k_i)][h(y^k_i)-h(y^{k+1}_i)]\\
    &\quad\quad=\sum_{i=1}^{N}g'[h(y^k_i)]h(y^k_i)-\sum_{i=1}^{N}g'[h(y^k_i)]h(y^{k+1}_i).\nonumber
\end{align}
Then, we can derive
\begin{align}
    &\mathcal{L}_{\alpha}(x^k,y^k,p^k)-\mathcal{L}_{\alpha}(x^{k+1},y^{k+1},p^{k+1})\nonumber\\
    &\quad\quad=\sum_{i=1}^{N}g[h(y^k_i)]-\sum_{i=1}^{N}g[h(y^{k+1}_i)]+f(x^k)+\langle p^k,Ax^k+By^k-c\rangle+\frac{\alpha}{2}\|Ax^k+By^k-c\|_2^2\nonumber\\
     &\quad\quad-\{f(x^{k+1})+\langle p^{k+1},Ax^{k+1}+By^{k+1}-c\rangle+\frac{\alpha}{2}\|Ax^{k+1}+By^{k+1}-c\|_2^2\}\nonumber\\
    &\quad\quad\geq\sum_{i=1}^{N}g'[h(y^k_i)]h(y^k_i)-\sum_{i=1}^{N}g'[h(y^k_i)]h(y^{k+1}_i)+f(x^k)+\langle p^k,Ax^k+By^k-c\rangle+\frac{\alpha}{2}\|Ax^k+By^k-c\|_2^2\nonumber\\
    &\quad\quad-\{f(x^{k+1})+\langle p^{k+1},Ax^{k+1}+By^{k+1}-c\rangle+\frac{\alpha}{2}\|Ax^{k+1}+By^{k+1}-c\|_2^2\}\nonumber\\
    &\quad\quad=\mathcal{L}_{\alpha}^k(x^k,y^{k},p^k)-\mathcal{L}_{\alpha}^k(x^{k+1},y^{k+1},p^{k+1})\nonumber\\
    &\quad\quad\geq \frac{\delta}{2}\|x^{k+1}-x^k\|_2^2+\frac{r-\alpha\|B\|_2^2}{2}\|y^{k+1}-y^k\|_2^2-\frac{1}{\alpha}\|p^{k+1}-p^k\|_2^2.\nonumber
\end{align}
With Lemma \ref{fp}, we then have
\begin{align}
    &\mathcal{L}_{\alpha}(x^k,y^k,p^k)-\mathcal{L}_{\alpha}(x^{k+1},y^{k+1},p^{k+1})\nonumber\\
     &\quad\geq(\frac{\delta}{2}-\frac{\eta}{\alpha})\|x^{k+1}-x^k\|_2^2+\frac{r-\alpha\|B\|_2^2}{2}\|y^{k+1}-y^k\|_2^2.
\end{align}
Letting $\nu:=\min\{\frac{\delta}{2}-\frac{\eta}{\alpha},\frac{r-\alpha\|B\|_2^2}{2}\}$, we then prove the result.
\end{proof}

In fact,  condition (\ref{condition}) can be always satisfied in applications because the parameters $r$ and $\alpha$ are both selected by the user.  Different with the ADMMs in convex setting, the parameter $\alpha$ is nonarbitrary, the $\alpha$ here should be sufficiently large.

\begin{lemma}[Boundedness]\label{boundness}
If $p^0\in \textrm{Im}(A)$ and conditions (\ref{conditionp}) and (\ref{condition}) hold, and there exists $\sigma_0>0$ such that
\begin{align}\label{quad}
\inf\{f(x)-\sigma_0\|\nabla f(x)\|_2^2\}>-\infty,
\end{align}
and
\begin{align}\label{alphabound}
\alpha\geq\frac{1}{2\sigma_0\theta^2}.
\end{align}
The sequence $\{d^k\}_{k=0,1,2,\ldots}$ is bounded, if  one of the following conditions holds:

\textbf{B1}. $g(y)$ is coercive, and $f(x)-\sigma_0\|\nabla f(x)\|_2^2$ is coercive.

\textbf{B2}. $g(y)$ is coercive,  and $A$ is invertible.

\textbf{B3}. $\inf\{g(y)\}>-\infty$, $f(x)-\sigma_0\|\nabla f(x)\|_2^2$ is coercive, and $A$ is invertible.
\end{lemma}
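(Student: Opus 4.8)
The plan is to turn the scalar monotonicity of the Lyapunov value into coordinatewise control of $x^k$, $y^k$ and $p^k$. First, by the descent Lemma \ref{descend} the sequence $\{\mathcal{L}_{\alpha}(d^k)\}$ is nonincreasing, so $\mathcal{L}_{\alpha}(d^k)\le\mathcal{L}_{\alpha}(d^0)=:C$ for every $k$. The whole difficulty is that $\mathcal{L}_{\alpha}(d^k)$ contains the sign-indefinite coupling $\langle p^k,Ax^k+By^k-c\rangle$, which by itself controls nothing; the crux is to absorb it.

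The key step is to complete the square in the residual $Ax^k+By^k-c$, which gives
\begin{align*}
\mathcal{L}_{\alpha}(d^k)=f(x^k)+\sum_{i=1}^{N}g[h(y_i^k)]+\frac{\alpha}{2}\Big\|Ax^k+By^k-c+\tfrac{1}{\alpha}p^k\Big\|_2^2-\frac{1}{2\alpha}\|p^k\|_2^2 .
\end{align*}
Because $p^0\in\textrm{Im}(A)$ and (\ref{conditionp}) holds, the bound (\ref{boundp}) yields $\|p^k\|_2\le\frac{1}{\theta}\|\nabla f(x^k)\|_2$, hence $-\frac{1}{2\alpha}\|p^k\|_2^2\ge-\frac{1}{2\alpha\theta^2}\|\nabla f(x^k)\|_2^2\ge-\sigma_0\|\nabla f(x^k)\|_2^2$, where the last inequality is exactly (\ref{alphabound}). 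Combining with $\mathcal{L}_{\alpha}(d^k)\le C$ produces the master estimate
\begin{align*}
\big[f(x^k)-\sigma_0\|\nabla f(x^k)\|_2^2\big]+\sum_{i=1}^{N}g[h(y_i^k)]+\frac{\alpha}{2}\Big\|Ax^k+By^k-c+\tfrac{1}{\alpha}p^k\Big\|_2^2\le C .
\end{align*}
This is the inequality that makes the unusual hypotheses (\ref{quad}) and (\ref{alphabound}) natural, and it is the heart of the argument.

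Next I would read off boundedness from this estimate. Each of the three summands on the left is bounded below: the bracket by (\ref{quad}), the regularizer $\sum_i g[h(y_i^k)]$ either by $\inf g>-\infty$ or by coercivity (a continuous coercive function attains its minimum, using that $g$ is differentiable and $h$ convex, hence continuous), and the quadratic term by $0$. Consequently each summand is also bounded above by $C$ minus the lower bounds of the others. In case \textbf{B1}, coercivity of $f-\sigma_0\|\nabla f\|_2^2$ bounds $\{x^k\}$ and coercivity of the regularizer bounds $\{y^k\}$ directly. In \textbf{B2}, coercivity of the regularizer again bounds $\{y^k\}$, whence $By^k$ is bounded, the quadratic term being bounded forces $\{Ax^k+\frac{1}{\alpha}p^k\}$ to be bounded, and then invertibility of $A$ (so $\|Ax\|_2\ge\theta\|x\|_2$) together with $\|p^k\|_2\le\frac{L_f}{\theta}\|x^k\|_2+\mathrm{const}$ gives, for $\alpha$ large as guaranteed by (\ref{condition})--(\ref{alphabound}), a bound on $\{x^k\}$. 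Case \textbf{B3} is symmetric: $\{x^k\}$ comes from coercivity of $f-\sigma_0\|\nabla f\|_2^2$, and $\{y^k\}$ is then recovered from the bounded quadratic term (using invertibility of $A$ to bound $By^k$ and injectivity of $B$ to pass from $By^k$ to $y^k$). Finally, once $\{x^k\}$ is bounded, continuity of $\nabla f$ and (\ref{boundp}) bound $\{p^k\}$, so the full sequence $\{d^k\}$ is bounded.

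The main obstacle is the middle paragraph: neutralizing the indefinite multiplier coupling. Completing the square and then replacing $-\frac{1}{2\alpha}\|p^k\|_2^2$ by $-\sigma_0\|\nabla f(x^k)\|_2^2$ via the $\textrm{Im}(A)$ bound is the nontrivial maneuver that produces the coercive expression $f-\sigma_0\|\nabla f\|_2^2$; without it the Lagrangian gives no control. The secondary difficulty is the B2/B3 extraction of the non-coercive variable from the quadratic penalty, which relies on invertibility of $A$ and on $\alpha$ being large enough that the residual term is not cancelled by $\frac{1}{\alpha}p^k$; verifying that the constants in (\ref{condition}) and (\ref{alphabound}) indeed make that estimate close is the only routine computation I would defer.
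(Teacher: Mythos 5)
Your proposal is correct and follows essentially the same route as the paper: complete the square in the residual, then use the bound $\|p^k\|_2\le\frac{1}{\theta}\|\nabla f(x^k)\|_2$ from $p^0\in\textrm{Im}(A)$ together with (\ref{alphabound}) to absorb $-\frac{1}{2\alpha}\|p^k\|_2^2$ into $-\sigma_0\|\nabla f(x^k)\|_2^2$, and read off boundedness term by term from the monotonicity of $\mathcal{L}_{\alpha}(d^k)$. The one bookkeeping difference is that the paper splits $-\frac{1}{2\alpha}\|p^k\|_2^2=-\sigma_0\theta^2\|p^k\|_2^2+(\sigma_0\theta^2-\frac{1}{2\alpha})\|p^k\|_2^2$ and keeps the nonnegative leftover, so that $\{p^k\}$ is bounded directly from the master estimate; this makes case \textbf{B2} immediate ($y^k$ from coercivity, then $Ax^k$ from the quadratic term, then $x^k$ from invertibility) and avoids the extra verification that $\alpha\theta^2>L_f$ which your version of \textbf{B2} needs and defers.
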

\begin{proof}
We have
\begin{align}
&\mathcal{L}_{\alpha}(d^k)=f(x^k)+g(y^k)+\langle p^k,Ax^k+By^k-c\rangle+\frac{\alpha}{2}\|Ax^k+By^k-c\|_2^2\nonumber\\
&=f(x^k)+g(y^k)-\frac{\|p^k\|_2^2}{2\alpha}+\frac{\alpha}{2}\|Ax^k+By^k-c+\frac{p^k}{\alpha}\|_2^2\nonumber\\
&=f(x^k)+g(y^k)-\sigma_0\theta^2\|p^k\|_2^2+(\sigma_0\theta^2-\frac{1}{2\alpha})\|p^k\|_2^2+\frac{\alpha}{2}\|Ax^k+By^k-c+\frac{p^k}{\alpha}\|_2^2\nonumber\\
(\ref{boundp})&\geq f(x^k)-\sigma_0\|\nabla f(x^k)\|_2^2+g(y^k)+(\sigma_0\theta^2-\frac{1}{2\alpha})\|p^k\|_2^2+\frac{\alpha}{2}\|Ax^k+By^k-c+\frac{p^k}{\alpha}\|_2^2.
\end{align}
Noting $\{\mathcal{L}_{\alpha}(d^k)\}_{k=0,1,2,\ldots}$ is decreasing with Lemma \ref{descend}, $\sup_{k}\mathcal{L}_{\alpha}(d^k)=\mathcal{L}_{\alpha}(d^0)$. We then can see $\{g(y^k)\}_{k=0,1,2,\ldots}$, $\{p^k\}_{k=0,1,2,\ldots}$, $\{Ax^k+By^k-c+\frac{p^k}{\alpha}\}_{k=0,1,2,\ldots}$ are all bounded. It is easy to see that if one of the three conditions holds, $\{d^k\}_{k=0,1,2,\ldots}$ will be bounded.
\end{proof}

\begin{remark}
The condition (\ref{quad}) holds for many quadratic functions \cite{li2015global,sun2017precompact}. This condition also implies the function $f$ is similar to quadratic function and its property is ``good".
\end{remark}

\begin{remark}
 Both assumptions \textbf{B2} and \textbf{B3} actually imply condition (\ref{boundp}).
\end{remark}

\begin{remark}
Combining  conditions (\ref{alphabound})  and (\ref{condition}), we then have
\begin{equation}\label{conpara}
   \alpha>\max\{1,\frac{2\eta}{\delta},\frac{1}{2\sigma_0\theta^2}\},r>\alpha\|B\|_2^2.
\end{equation}
To determine the $\alpha$, we need to obtain $\sigma_0$, $\theta$ and $\delta$ first. Computing these constants may be hard due to that they may fail to enjoy the explicit forms. Thus, in the experiments, we use an increasing technique introduced in \cite{lu2017nonconvex}.
\end{remark}

\begin{lemma}[Relative error]\label{relative}
If conditions (\ref{conditionp}), (\ref{conpara}), and (\ref{quad}) hold, and one of assumptions \textbf{B1}, \textbf{B2} and \textbf{B3} holds. Then for any $k\in \mathbb{Z}_+$, there exists $\tau>0$ such that
\begin{equation}
    \textrm{dist}(\textbf{0},\partial\mathcal{L}_{\alpha}(d^{k+1}))\leq \tau\|z^k-z^{k+1}\|_2.
\end{equation}
\end{lemma}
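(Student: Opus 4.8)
The plan is to exhibit one explicit element of $\partial\mathcal{L}_{\alpha}(d^{k+1})$ by computing the subdifferential block-by-block in $x$, $y$, $p$, and then to bound its Euclidean norm by $\tau\|z^{k+1}-z^k\|_2$; since $\textrm{dist}(\textbf{0},\partial\mathcal{L}_{\alpha}(d^{k+1}))$ is no larger than the norm of any particular element, this suffices. First I would record the three blocks. In $x$ the subdifferential is single-valued and equals $\nabla f(x^{k+1})+A^{\top}p^{k+1}+\alpha A^{\top}(Ax^{k+1}+By^{k+1}-c)$; in $p$ it is the residual $Ax^{k+1}+By^{k+1}-c$; and in $y$, using the chain rule established in Proposition \ref{criL}, it is $W^{k+1}\partial h(y^{k+1})+B^{\top}p^{k+1}+\alpha B^{\top}(Ax^{k+1}+By^{k+1}-c)$, where $W^{k+1}=\textrm{Diag}\{g'[h(y^{k+1}_i)]\}$.

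The $x$- and $p$-blocks are dispatched quickly. Combining the optimality condition of the $x$-subproblem with the $p$-update recovers the identity $\nabla f(x^{k+1})=-A^{\top}p^{k+1}$ already used in Lemma \ref{fp}, so the $x$-block collapses to $\alpha A^{\top}(Ax^{k+1}+By^{k+1}-c)=A^{\top}(p^{k+1}-p^k)$, whose norm is at most $\|A\|_2\sqrt{\eta}\,\|x^{k+1}-x^k\|_2$ by Lemma \ref{fp}. The $p$-block equals $\frac{1}{\alpha}(p^{k+1}-p^k)$ by the definition of the $p$-update, again controlled by $\|x^{k+1}-x^k\|_2$ through Lemma \ref{fp}.

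The $y$-block is the heart of the argument, and is where the reweighting linearization must be reconciled. The key device is to choose the specific subgradient $\xi^{k+1}\in\partial h(y^{k+1})$ supplied by the optimality condition of the $y$-subproblem, namely $0=W^{k}\xi^{k+1}+B^{\top}(p^k+\alpha(Ax^k+By^k-c))+r(y^{k+1}-y^k)$, and to use that \emph{same} $\xi^{k+1}$ when forming $\partial_y\mathcal{L}_{\alpha}(d^{k+1})$. Adding and subtracting $W^{k}\xi^{k+1}$ and substituting this optimality relation, the $y$-block rewrites as $(W^{k+1}-W^{k})\xi^{k+1}-r(y^{k+1}-y^k)+B^{\top}(p^{k+1}-p^k)+\alpha B^{\top}A(x^{k+1}-x^k)+\alpha B^{\top}B(y^{k+1}-y^k)$. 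Every term except the first is already linear in $x^{k+1}-x^k$ or $y^{k+1}-y^k$ and is bounded via $\|A\|_2$, $\|B\|_2$, $r$, $\alpha$, and Lemma \ref{fp}.

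The main obstacle is controlling the residual reweighting term $(W^{k+1}-W^{k})\xi^{k+1}$, and here I would invoke the boundedness of the iterates (Lemma \ref{boundness}). Its $i$-th entry is $(g'[h(y^{k+1}_i)]-g'[h(y^{k}_i)])\xi^{k+1}_i$; by the $L_g$-Lipschitz continuity of $g'$ (Assumption A.3) this is bounded by $L_g|h(y^{k+1}_i)-h(y^{k}_i)|\,|\xi^{k+1}_i|$. Because $\{y^k\}$ lies in a bounded set and $h$ is a finite convex function, $h$ is Lipschitz with some constant $L_h$ on that set and its subgradients there are bounded by $L_h$; hence $|h(y^{k+1}_i)-h(y^{k}_i)|\le L_h|y^{k+1}_i-y^{k}_i|$ and $|\xi^{k+1}_i|\le L_h$, giving $\|(W^{k+1}-W^{k})\xi^{k+1}\|_2\le L_g L_h^2\|y^{k+1}-y^k\|_2$. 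Collecting the three blocks and using $\|x^{k+1}-x^k\|_2,\|y^{k+1}-y^k\|_2\le\|z^{k+1}-z^k\|_2$ then yields the claim with an explicit $\tau$ depending on $\|A\|_2,\|B\|_2,\alpha,r,\eta,L_g,L_h$.
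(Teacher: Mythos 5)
Your proof is correct and follows essentially the same route as the paper's: the same block decomposition of $\partial\mathcal{L}_{\alpha}(d^{k+1})$, the same identity $\nabla f(x^{k+1})=-A^{\top}p^{k+1}$ collapsing the $x$- and $p$-blocks to multiples of $p^{k+1}-p^k$ (then Lemma \ref{fp}), and the same key device of re-using the specific subgradient $\xi^{k+1}\in\partial h(y^{k+1})$ supplied by the $y$-optimality condition so that the $y$-block reduces to $(W^{k+1}-W^{k})\xi^{k+1}$ plus terms linear in the iterate differences, with $\|W^{k+1}-W^{k}\|_2\leq L_gL_h\|y^{k+1}-y^k\|_2$. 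The only cosmetic difference is that you bound $|\xi^{k+1}_i|$ by $L_h$ directly (subgradients of a convex function Lipschitz on the bounded iterate set), whereas the paper bounds $\|\xi^{k+1}\|_2=\|(W^k)^{-1}v^k\|_2\leq R_1/\delta_1$ via the uniform lower bound $\delta_1$ on the weights; both are valid.
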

\begin{proof}
Due to that $h$ is convex, $h$ is Lipschitz continuous with some constant if being restricted to some bounded set.
Thus, there exists $L_h>0$ such that
$$|h(y^{k+1}_i)-h(y^{k}_i)|\leq L_h|y^{k+1}_i-y^{k}_i|.$$
Updating $y^{k+1}$ in each iteration certainly yields
\begin{align}
    r(y^k-y^{k+1})&-B^{\top}(\alpha(Ax^k+By^{k}-c)+p^k)\in W^k\partial h(y^{k+1}),
\end{align}
where $h(y):=\sum_{i=1}^N h(y_i)$ and $W^{k}:=\textrm{Diag}\{g'[h(y^{k}_1)],g'[h(y^{k}_2)],\ldots,g'[h(y^{k}_N)]\}$. Noting the boundedness of the sequence and $h(y^{k}_i)$, the continuity of $g'$ indicates there exist $\delta_1,\delta_2>0$ such that
\begin{equation}
    \delta_1\leq g'[h(y^{k}_i)]\leq \delta_2, i\in[1,2,\ldots,N], k\in \mathbb{Z}_+.
\end{equation}
Easy computation gives
\small
\begin{align}\label{ypart}
    &W^{k+1}(W^k)^{-1}[ r(y^k-y^{k+1})-\alpha B^{\top}(Ax^k+By^{k}-c)\nonumber\\
    &\quad-B^{\top}p^k]+B^{\top}p^{k+1}+\alpha B^{\top}(Ax^{k+1}+By^{k+1}-c)\in \partial_{y}\mathcal{L}_{\alpha}(d^{k+1}).
\end{align}
\normalsize
%The left side of (\ref{ypart}) can be rewritten as
%\small
%\begin{align}
%    &(W^{k+1}-W^k)(W^k)^{-1}[ r_1(y^k-y^{k+1})\nonumber\\
%    &\quad-\alpha B^{\top}(Ax^k+By^{k}-c)-B^{\top}p^k]\nonumber\\
%    &\quad+r_1(y^k-y^{k+1})+B^{\top}(p^{k+1}-p^k)\nonumber\\
%    &\quad+\alpha B^{\top}[A(x^{k+1}-x^k)+B(y^{k+1}-y^k)]\in \partial_{y}\mathcal{L}_{\alpha}(d^{k+1}).
%\end{align}
%\normalsize
With the boundedness of the generated points, there exist $R_1>0$ such that
\begin{equation}
    \|r(y^k-y^{k+1})-\alpha B^{\top}(Ax^k+By^{k}-c)-B^{\top}p^k\|_2\leq R_1.
\end{equation}
Thus, we have
\begin{align}
    &\textrm{dist}(\textbf{0},\partial_{y}\mathcal{L}_{\alpha}(d^{k+1}))\leq\frac{R_1}{\delta_1}\|W^{k+1}-W^{k}\|_2\nonumber\\
    &\quad+\|B^{\top}p^{k+1}-B^{\top}p^{k}\|_2+\|\alpha B^{\top}A(x^{k+1}-x^{k})\|_2\nonumber\\
    &\quad+\|\alpha B^{\top}B(y^{k+1}-y^{k})\|_2+r\|y^{k+1}-y^{k}\|_2\nonumber\\
    &\quad\leq\frac{R_1}{\delta_1}\|W^{k+1}-W^{k}\|_2+\|B\|_2\cdot\|p^{k+1}-p^{k}\|_2\nonumber\\
    &\quad+\alpha\|B\|_2\cdot\|A\|_2\cdot\|x^{k+1}-x^{k}\|_2\nonumber\\
    &\quad+(\alpha\|B\|_2^2+r)\cdot\|y^{k+1}-y^{k}\|_2.
\end{align}
Obviously, it holds that
\begin{align}
    &\|W^{k+1}-W^{k}\|_2\leq\max_{i}|g'[h(y^{k+1}_i)]-g'[h(y^{k}_i)]|\nonumber\\
    &\quad\leq L_g\max_{i}|h(y^{k+1}_i)-h(y^{k}_i)|\nonumber\\
    &\quad\leq L_g L_h\|y^{k+1}-y^{k}\|_{\infty}\leq L_g L_h\|y^{k+1}-y^{k}\|_{2}.\nonumber
\end{align}
Thus, with Lemma \ref{fp}, we derive that
\begin{equation}
     \textrm{dist}(\textbf{0},\partial_{y}\mathcal{L}_{\alpha}(d^{k+1}))\leq \tau_y\|z^{k+1}-z^k\|_2,
\end{equation}
for $\tau_y=\max\{\frac{L_g L_h R_1}{\delta_1}+\alpha\|B\|_2^2+r,\alpha\|B\|_2\|A\|_2+\|B\|_2\sqrt{\eta}\}$.
From the second step in each iteration,
\begin{align}\label{xpart1}
    \nabla f(x^{k+1})=-A^{\top}p^{k}-\alpha A^{\top}(Ax^{k+1}+By^{k+1}-c).
\end{align}
Direct calculation gives
 \begin{align}\label{xpart}
   & \nabla f(x^{k+1})+A^{\top}p^{k+1}+\alpha A^{\top}(Ax^{k+1}+By^{k+1}-c)\in \partial_{x} \mathcal{L}_{\alpha}(d^{k+1}).
\end{align}
That is also
 \begin{align}\label{xpart3}
 A^{\top}(p^{k+1}-p^k)\in \partial_{x} \mathcal{L}_{\alpha}(d^{k+1}).
\end{align}
With Lemma \ref{fp}, we have
\begin{align}
    &\textrm{dist}(\textbf{0},\partial_{x}\mathcal{L}_{\alpha}(d^{k+1}))\leq\|A^{\top}p^{k+1}-A^{\top}p^{k}\|_2\nonumber\\
    &\quad\leq\|A\|_2\cdot\|p^{k+1}-p^{k}\|_2\leq \|A\|_2\sqrt{\eta}\|x^{k+1}-x^{k}\|_2\leq\tau_x\|z^{k+1}-z^k\|_2,\nonumber
\end{align}
where $\tau_x=\|A\|_2\sqrt{\eta}$.
It is easy to see
\begin{equation}
    \frac{p^{k+1}-p^k}{\alpha}=Ax^{k+1}+By^{k+1}-c\in \partial_{p} \mathcal{L}_{\alpha}(d^{k+1}).
\end{equation}
And we have
\begin{align}
     &\textrm{dist}(\textbf{0},\partial_{p}\mathcal{L}_{\alpha}(d^{k+1}))\leq
      \tau_{p}\|x^{k+1}-x^k\|_2\leq  \tau_{p}\|z^{k+1}-z^k\|_2,
\end{align}
for   $\tau_p=\frac{\sqrt{\eta}}{\alpha}$.
With the deductions above,
\begin{align}
    \textrm{dist}(\textbf{0},\partial\mathcal{L}_{\alpha}(d^{k+1}))&\leq(\tau_x+\tau_y+\tau_p)\times(\|z^{k+1}-z^k\|_2).
\end{align}
Denoting $\tau:=\tau_x+\tau_y+\tau_p$, we then finish the proof.
\end{proof}

\begin{lemma}
If the sequence $\{(x^k,y^k,p^k)\}_{k=0,1,2,\ldots}$ is bounded and conditions of  Lemma \ref{relative} hold, then we have
\begin{equation}
    \lim_{k}\|z^{k+1}-z^k\|_2=0.
\end{equation}
For any cluster point $(x^*,y^*,p^*)$, it is also a critical point of $\mathcal{L}_{\alpha}(x,y,p)$.
\end{lemma}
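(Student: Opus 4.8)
The plan is to read off both conclusions from the descent estimate of Lemma~\ref{descend}, the relative-error estimate of Lemma~\ref{relative}, and the closedness of the limiting subdifferential recorded in Proposition~\ref{sublimit}, with $\mathcal{L}_{\alpha}$ itself serving as the Lyapunov function. First I would show that the values $\{\mathcal{L}_{\alpha}(d^k)\}_{k}$ converge. Lemma~\ref{descend} makes this sequence nonincreasing, and since $\{d^k\}$ is bounded and $\mathcal{L}_{\alpha}$ is continuous (a finite sum of the continuous functions $f$, $g\circ h$, and the bilinear and quadratic terms), $\mathcal{L}_{\alpha}$ is bounded on the closure of the set containing the iterates; hence $\mathcal{L}_{\alpha}(d^k)\to\mathcal{L}^*$ for some finite $\mathcal{L}^*$. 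Consequently $\mathcal{L}_{\alpha}(d^k)-\mathcal{L}_{\alpha}(d^{k+1})\to 0$, and substituting into $\nu\|z^{k+1}-z^k\|_2^2\le\mathcal{L}_{\alpha}(d^k)-\mathcal{L}_{\alpha}(d^{k+1})$ together with $\nu>0$ yields $\|z^{k+1}-z^k\|_2\to 0$, the first claim. Telescoping in fact gives the stronger bound $\sum_{k}\|z^{k+1}-z^k\|_2^2\le(\mathcal{L}_{\alpha}(d^0)-\mathcal{L}^*)/\nu<\infty$.

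Next I would promote this to $\|d^{k+1}-d^k\|_2\to 0$. Since $z^k=(x^k,y^k)$, the decay of $\|z^{k+1}-z^k\|_2$ controls both $\|x^{k+1}-x^k\|_2$ and $\|y^{k+1}-y^k\|_2$; Lemma~\ref{fp} then gives $\|p^{k+1}-p^k\|_2^2\le\eta\|x^{k+1}-x^k\|_2^2\to 0$, so the full increment vanishes. This is the bridge that forces the two index-shifted subsequences to share a limit.

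For the critical-point claim, let $(x^*,y^*,p^*)$ be any cluster point, realized as $d^{k_j}\to d^*$ along a subsequence; because increments vanish, $d^{k_j+1}\to d^*$ as well. Lemma~\ref{relative} supplies $w^{k_j+1}\in\partial\mathcal{L}_{\alpha}(d^{k_j+1})$ with $\|w^{k_j+1}\|_2\le\tau\|z^{k_j}-z^{k_j+1}\|_2\to 0$, while continuity of $\mathcal{L}_{\alpha}$ gives $\mathcal{L}_{\alpha}(d^{k_j+1})\to\mathcal{L}_{\alpha}(d^*)$. Thus $(d^{k_j+1},w^{k_j+1})$ satisfies every hypothesis of Proposition~\ref{sublimit}, and passing to the limit delivers $\textbf{0}\in\partial\mathcal{L}_{\alpha}(d^*)$, i.e.\ $d^*$ is critical. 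The main obstacle is the index bookkeeping: the relative-error bound is anchored at the shifted index $k+1$, so I must first secure $\|d^{k+1}-d^k\|_2\to 0$ (hence $d^{k_j+1}\to d^*$) and the value convergence $\mathcal{L}_{\alpha}(d^{k_j+1})\to\mathcal{L}_{\alpha}(d^*)$ before the closedness of the subdifferential can be applied; the remaining steps are routine.
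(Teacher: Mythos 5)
Your argument is correct, and your first claim ($\|z^{k+1}-z^k\|_2\to 0$ via monotonicity from Lemma \ref{descend}, boundedness plus continuity of $\mathcal{L}_{\alpha}$ to get convergence of the values, and Lemma \ref{fp} for the dual increments) coincides with the paper's. For the critical-point claim you take a genuinely different route: you invoke the relative-error bound of Lemma \ref{relative} to produce subgradients $w^{k_j+1}\in\partial\mathcal{L}_{\alpha}(d^{k_j+1})$ with $\|w^{k_j+1}\|_2\leq\tau\|z^{k_j+1}-z^{k_j}\|_2\to 0$, and then close the argument by applying the closedness of the limiting subdifferential (Proposition \ref{sublimit}) to $\mathcal{L}_{\alpha}$ as a whole. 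The paper instead never touches Lemma \ref{relative} in this proof: it writes out the explicit first-order optimality conditions of the three subproblems along the subsequence, tracks the weight matrices $W^{k_j}=\textrm{Diag}\{g'[h(y^{k_j}_i)]\}$ and their convergence to $W^*$, passes to the limit in each condition separately (using Proposition \ref{sublimit} only for $\partial h$), and identifies the resulting system with the characterization of critical points in Proposition \ref{criL}. Your version is shorter and is in fact the same mechanism the paper itself deploys later in Lemma \ref{station}(1); what the paper's longer computation buys is the explicit limiting stationarity system $-B^{\top}p^*\in W^{*}\partial h(y^{*})$, $-A^{\top}p^{*}=\nabla f(x^{*})$, $Ax^{*}+By^{*}=c$, which is more informative than the bare inclusion $\mathbf{0}\in\partial\mathcal{L}_{\alpha}(d^*)$. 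Both routes are valid under the stated hypotheses, since the conditions of Lemma \ref{relative} are assumed to hold.
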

\begin{proof}
We can easily see that $\{d^k\}_{k=0,1,2,\ldots}$ is also bounded. The continuity of $\mathcal{L}_{\alpha}$ indicates that $\{\mathcal{L}_{\alpha}(d^k)\}_{k=0,1,2,\ldots}$ is bounded. From Lemma \ref{descend},  $\mathcal{L}_{\alpha}(d^k)$ is decreasing. Thus, the sequence $\{\mathcal{L}_{\alpha}(d^k)\}_{k=0,1,2,\ldots}$ is convergent, i.e., $\lim_{k}[\mathcal{L}_{\alpha}(d^k)-\mathcal{L}_{\alpha}(d^{k+1})]=0$.  With Lemma \ref{descend}, we have
\begin{equation}
    \lim_{k}\|z^{k+1}-z^k\|_2\leq\lim_{k}\sqrt{\frac{\xi(d^k)-\xi(d^{k+1})}{\nu}}=0.
\end{equation}
From the scheme of the ILR-ADMM, we also have
\begin{equation}
    \lim_{k}\|p^{k+1}-p^k\|_2=0.
\end{equation}
For any cluster point $(x^*,y^*,p^*)$, there exists $\{k_j\}_{j=0,1,2,\ldots}$ such that $\lim_{j}(x^{k_j},y^{k_j},p^{k_j})=(x^*,y^*,z^*)$. Then, we further have $\lim_{j}z^{k_j+1}=(x^*,y^*)$. From Lemma \ref{fp}, we also have $\lim_{j}p^{k_j+1}=p^*$.  That also means
\begin{equation}
    \lim_{j}W^{k_j}=W^*.
\end{equation}
From the scheme, we have the following conditions
\begin{align}
 &(W^{k_j})^{-1}[r(y^{k_j}-y^{k_j+1})-B^{\top}(\alpha(Ax^{k_j}+By^{k_j}-c)+p^{k_j})]\in \partial h(y^{k_j+1}),\nonumber\\
&-A^{\top}p^{k_j}-\alpha A^{\top}(Ax^{k_j+1}+By^{k_j+1}-c)=\nabla f(x^{k_j+1}),\nonumber\\
&\quad\quad p^{k_j+1}=p^{k_j}+\alpha(Ax^{k_j+1}+By^{k_j+1}-c).\nonumber
\end{align}
Letting $j\rightarrow+\infty$, with Proposition \ref{sublimit}, we have
\begin{eqnarray}
 (W^{*})^{-1}[-B^{\top}p^{*}]&\in& \partial h(y^{*}),\nonumber\\
-A^{\top}p^{*}&=& \nabla f(x^{*}),\nonumber\\
Ax^{*}+By^{*}-c&=&\textbf{0}.\nonumber
\end{eqnarray}
The first relation above is actually $-B^{\top}p^{*}\in W^{*}\partial h(y^{*})$. From Proposition \ref{criL}, $(x^*,y^*,z^*)$ is a critical point of $\mathcal{L}_{\alpha}$.
\end{proof}

In the following, to prove the convergence result,  we first establish some results about the limit points of the sequence generated by ILR-ADMM. These results are presented for the use of Lemma \ref{con}.
We recall a  definition of the  limit point which is introduced in \cite{bolte2014proximal}.
\begin{definition}
Define that
\begin{align}
    &\mathcal{M}(d^0):=\{d\in \mathbb{R}^N: \exists~\textrm{an increasing sequence of integers}~~\{k_j\}_{j\in\mathbb{N}}~\textrm{such that} ~d^{k_j}\rightarrow d ~\textrm{as}~ j\rightarrow \infty\},\nonumber
\end{align}
 where $d^0\in \mathbb{R}^n$ is an arbitrary starting point.
\end{definition}

\begin{lemma}\label{station}
Suppose that  the conditions of Lemma \ref{relative} hold, and  $\{d^{k}\}_{k=0,1,2,\ldots}$ is generated by scheme (\ref{scheme}). Then, we have the following results.

(1) $\mathcal{M}(d^0)$ is nonempty and $\mathcal{M}(d^0)\subseteq \textrm{cri}(\mathcal{L}_{\alpha})$.

(2) $\lim_{k}\textrm{dist}(d^k,\mathcal{M}(d^0))=0$.

(3) $\mathcal{L}_{\alpha}$ is finite and constant on $\mathcal{M}(d^0)$.
\end{lemma}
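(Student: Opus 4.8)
For (1), the plan is to assemble facts already proved. Nonemptiness is immediate: under the hypotheses of Lemma \ref{relative} one of \textbf{B1}--\textbf{B3} holds, so Lemma \ref{boundness} guarantees that $\{d^k\}_{k=0,1,2,\ldots}$ is bounded, and the Bolzano--Weierstrass theorem furnishes at least one convergent subsequence, whence $\mathcal{M}(d^0)\neq\emptyset$. For the inclusion $\mathcal{M}(d^0)\subseteq\textrm{cri}(\mathcal{L}_{\alpha})$, I would take an arbitrary $d^*=(x^*,y^*,p^*)\in\mathcal{M}(d^0)$ with $d^{k_j}\to d^*$, and reproduce the argument of the preceding (unlabelled) lemma: use $\lim_k\|z^{k+1}-z^k\|_2=0$ together with Lemma \ref{fp} (giving $\lim_k\|p^{k+1}-p^k\|_2=0$) to pass the first-order optimality conditions of scheme (\ref{scheme}) to the limit, invoking Proposition \ref{sublimit} for the $\partial h$ term and the continuity of $\nabla f$ and $g'$, and then conclude via Proposition \ref{criL} that $d^*$ is a critical point.

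For (2), I would argue by contradiction using only boundedness, since this is a purely topological fact about a bounded sequence and its cluster set. If $\textrm{dist}(d^k,\mathcal{M}(d^0))$ did not tend to $0$, there would exist $\epsilon>0$ and a subsequence $\{d^{k_j}\}$ with $\textrm{dist}(d^{k_j},\mathcal{M}(d^0))\geq\epsilon$ for all $j$. As $\{d^{k_j}\}$ is bounded it admits a further subsequence converging to some $\bar d$, and by definition $\bar d\in\mathcal{M}(d^0)$; this forces the distance to $0$ along that sub-subsequence, contradicting the lower bound $\epsilon$.

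For (3), the key observation is that $\{\mathcal{L}_{\alpha}(d^k)\}_{k=0,1,2,\ldots}$ is nonincreasing by Lemma \ref{descend} and starts from the finite value $\mathcal{L}_{\alpha}(d^0)$, while staying bounded below on the bounded sequence; hence it converges to a finite limit $\ell$. For any $d^*\in\mathcal{M}(d^0)$ with $d^{k_j}\to d^*$, the continuity of $\mathcal{L}_{\alpha}$ yields $\mathcal{L}_{\alpha}(d^*)=\lim_j\mathcal{L}_{\alpha}(d^{k_j})=\ell$, and since $\ell$ does not depend on the chosen cluster point, $\mathcal{L}_{\alpha}\equiv\ell$ on $\mathcal{M}(d^0)$, which is the claim.

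None of the three parts presents a serious obstacle once the earlier lemmas are in hand; the only point requiring mild care is the continuity of $\mathcal{L}_{\alpha}$ used in (3), which I would justify by noting that $g\circ h$ is continuous (as $g$ is differentiable and $h$ is convex, hence continuous) and the inner-product and quadratic terms are polynomial in the variables. The substantive work — sufficient decrease, the subgradient bound, and boundedness — has already been carried out in Lemmas \ref{descend}, \ref{relative}, and \ref{boundness}; Lemma \ref{station} merely repackages their outputs into the three properties needed to apply the uniformized KL inequality of Lemma \ref{con} in the final convergence theorem.
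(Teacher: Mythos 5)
Your proposal is correct and follows essentially the same route as the paper: boundedness plus Bolzano--Weierstrass for nonemptiness, a standard compactness argument for (2) (which the paper merely asserts ``follows from the definition''), and continuity plus monotone convergence of $\mathcal{L}_{\alpha}(d^k)$ for (3). The only cosmetic difference is in (1), where the paper combines Lemma \ref{relative} with Proposition \ref{sublimit} applied directly to $\partial\mathcal{L}_{\alpha}$ to produce vanishing subgradients $\omega^{k_i}\to\mathbf{0}$, whereas you pass the individual optimality conditions to the limit as in the preceding unlabelled lemma and invoke Proposition \ref{criL}; both arguments already appear in the paper and are equivalent.
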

\begin{proof}
(1) Due to that $\{d^{k}\}_{k=0,1,2,\ldots}$  is bounded, $\mathcal{M}(d^0)$ is nonempty. Assume that $d^*\in \mathcal{M}(d^0)$, from the definition, there exists
a subsequence $d^{k_i}\rightarrow d^*$. From Lemma \ref{descend}, we have $d^{k_i-1}\rightarrow d^*$. From Lemma \ref{relative}, there exists $\omega^{k_i}\in \partial \mathcal{L}_{\alpha}(d^{k_i})$ and
$\omega^{k_i}\rightarrow \mathbf{0}$. Proposition \ref{sublimit} indicates that $\mathbf{0}\in \partial \mathcal{L}_{\alpha}(d^*)$, i.e. $d^*\in \textrm{cri} (\mathcal{L}_{\alpha})$.

(2) This item follows as a consequence of the definition of the limit point.

(3) The continuity of $\mathcal{L}_{\alpha}(d)$ directly yields this result.
\end{proof}

\begin{theorem}[Convergence result]\label{confinal}
Suppose that $f$ and $g$ are all closed proper semi-algebraic functions.
Assume that
conditions (\ref{conditionp}), (\ref{conpara}), and (\ref{quad}) and one of \textbf{B1}, \textbf{B2}, \textbf{B3} hold. Let the sequence $\{(x^k,y^k,p^k)\}_{k=1,2,3,\ldots}$
 generated by ILR-ADMM be bounded. Then, the sequence $\{z^k=(x^k,y^k)\}_{k=0,1,2,3,\ldots}$ has finite length, i.e.
\begin{equation}
\sum_{k=0}^{+\infty}\|z^{k+1}-z^k\|_2<+\infty.
\end{equation}
And $\{(x^k,y^k,p^k)\}_{k=1,2,3,\ldots}$ converges to some $(x^*,y^*,p^*)$, which is a critical point  of $\mathcal{L}_{\alpha}(x,y,p)$.
\end{theorem}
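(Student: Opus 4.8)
The plan is to run the standard Kurdyka-{\L}ojasiewicz convergence scheme, combining the sufficient descent of Lemma \ref{descend}, the relative-error bound of Lemma \ref{relative}, and the limit-point analysis of Lemma \ref{station}. First I would observe that since $f$ and $g$ are semi-algebraic, the Lyapunov function $\mathcal{L}_{\alpha}$ is semi-algebraic (a finite sum of products and compositions of semi-algebraic functions), hence by Lemma \ref{semikl} it is a KL function. Because the iterates are bounded, the limit set $\mathcal{M}(d^0)$ is nonempty and compact, $\mathcal{L}_{\alpha}$ is finite and constant on it with some value $\mathcal{L}^*$ (Lemma \ref{station}(3)), and $\textrm{dist}(d^k,\mathcal{M}(d^0))\to 0$ (Lemma \ref{station}(2)). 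Applying the uniformized KL property of Lemma \ref{con} on the compact set $\mathcal{M}(d^0)$ then furnishes a single concave $\varphi$ together with a neighborhood and threshold that are valid for all $d^k$ with $k$ large.

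Next I would separate two cases. Since $\mathcal{L}_{\alpha}(d^k)$ is nonincreasing (Lemma \ref{descend}) and converges to $\mathcal{L}^*$, either $\mathcal{L}_{\alpha}(d^{k_0})=\mathcal{L}^*$ at some finite $k_0$, in which case the descent inequality forces $z^{k+1}=z^k$ for all $k\ge k_0$ and the finite-length claim is immediate, or $\mathcal{L}_{\alpha}(d^k)>\mathcal{L}^*$ for every $k$. In the latter case, for $k$ large the KL inequality gives $\varphi'(\mathcal{L}_{\alpha}(d^k)-\mathcal{L}^*)\cdot\textrm{dist}(\textbf{0},\partial\mathcal{L}_{\alpha}(d^k))\ge 1$. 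Using concavity of $\varphi$ to bound $\varphi(\mathcal{L}_{\alpha}(d^k)-\mathcal{L}^*)-\varphi(\mathcal{L}_{\alpha}(d^{k+1})-\mathcal{L}^*)$ below by $\varphi'(\mathcal{L}_{\alpha}(d^k)-\mathcal{L}^*)(\mathcal{L}_{\alpha}(d^k)-\mathcal{L}_{\alpha}(d^{k+1}))$, then inserting the descent bound $\mathcal{L}_{\alpha}(d^k)-\mathcal{L}_{\alpha}(d^{k+1})\ge\nu\|z^{k+1}-z^k\|_2^2$ and the relative-error bound $\textrm{dist}(\textbf{0},\partial\mathcal{L}_{\alpha}(d^k))\le\tau\|z^k-z^{k-1}\|_2$, I obtain an estimate of the form $\|z^{k+1}-z^k\|_2^2\le \frac{\tau}{\nu}\,\Delta_k\,\|z^k-z^{k-1}\|_2$, where $\Delta_k:=\varphi(\mathcal{L}_{\alpha}(d^k)-\mathcal{L}^*)-\varphi(\mathcal{L}_{\alpha}(d^{k+1})-\mathcal{L}^*)$.

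From here the finite-length conclusion follows by the usual manipulations: the inequality $2\sqrt{ab}\le a+b$ gives $\|z^{k+1}-z^k\|_2\le\frac12\|z^k-z^{k-1}\|_2+\frac{\tau}{2\nu}\Delta_k$, and summing over $k$ makes the $\Delta_k$ terms telescope to the finite quantity $\varphi(\mathcal{L}_{\alpha}(d^{K})-\mathcal{L}^*)$ while the half-increment term is absorbed into the left-hand side; hence $\sum_k\|z^{k+1}-z^k\|_2<+\infty$. Summability makes $\{z^k\}$ a Cauchy sequence converging to some $z^*=(x^*,y^*)$, and Lemma \ref{fp} gives $\|p^{k+1}-p^k\|_2\le\sqrt{\eta}\,\|x^{k+1}-x^k\|_2$, so $\{p^k\}$ also has finite length and converges to some $p^*$. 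Finally, Lemma \ref{station}(1) identifies $(x^*,y^*,p^*)$ as a critical point of $\mathcal{L}_{\alpha}$. I expect the main obstacle to be the careful index bookkeeping in the core case: the descent inequality controls $\|z^{k+1}-z^k\|_2^2$ at step $k$, whereas Lemma \ref{relative} naturally bounds $\textrm{dist}(\textbf{0},\partial\mathcal{L}_{\alpha}(d^{k+1}))$ by $\|z^{k+1}-z^k\|_2$, so the two must be aligned by a one-step shift before the concavity and AM-GM estimates can be chained into a summable recursion.
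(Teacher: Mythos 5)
Your proposal is correct and follows essentially the same route as the paper: semi-algebraicity gives the KL property for $\mathcal{L}_{\alpha}$, the uniformized KL inequality of Lemma \ref{con} is applied on the compact limit set from Lemma \ref{station}, and the descent, relative-error, and concavity estimates are chained via AM--GM into a telescoping, summable bound, after which Lemma \ref{fp} transfers convergence to $\{p^k\}$. Your handling of the one-step index shift (bounding $\textrm{dist}(\textbf{0},\partial\mathcal{L}_{\alpha}(d^k))$ by $\tau\|z^k-z^{k-1}\|_2$ and absorbing $\tfrac12\|z^k-z^{k-1}\|_2$ into the summed left-hand side) is in fact more careful than the paper's, which writes the same-index bound $\textrm{dist}(\textbf{0},\partial\mathcal{L}_{\alpha}(d^k))\le\tau\|z^{k+1}-z^k\|_2$ and telescopes directly.
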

\begin{proof}
Obviously, $\mathcal{L}_{\alpha}$ is also semi-algebraic. And with Lemma \ref{semikl}, $\mathcal{L}_{\alpha}$ is KL. From Lemma \ref{station}, $\mathcal{L}_{\alpha}$ is constant on $\mathcal{M}(d^0)$. Let $d^*$ be a stationary point of $\{d^k\}_{k=0,1,2,\ldots}$. Also from Lemma \ref{station}, we have $\textrm{dist}(d^k,\mathcal{M}(d^0))<\varepsilon$ and $ \mathcal{L}_{\alpha}(d^k)< \mathcal{L}_{\alpha}(d^*)+\eta$ if any $k>K$ for some $K$. If for some $k'>K$, $\textrm{dist}(\textbf{0},\partial \mathcal{L}_{\alpha}(d^{k'}))=0$, that is $d^{k'}\in \mathcal{M}(d^0)$. With Lemma \ref{station}, $ \mathcal{L}_{\alpha}(d^{k})= \mathcal{L}_{\alpha}(d^{k'})$ when $k\geq k'$. Thus, with Lemma \ref{descend}, $z^k=z^{k'}$ when $k\geq k'$.
If $\textrm{dist}(\textbf{0},\partial \mathcal{L}_{\alpha}(d^k))\neq 0$ for any $k>K$, with Lemma \ref{con}, we have
\begin{equation}
    \textrm{dist}(\textbf{0},\partial \mathcal{L}_{\alpha}(d^k))\cdot\varphi'(\mathcal{L}_{\alpha}(d^k)-\mathcal{L}_{\alpha}(d^*))\geq 1,
\end{equation}
which together with Lemma \ref{relative} gives
\begin{align}
    &\frac{1}{\varphi'(\mathcal{L}_{\alpha}(d^k)-\mathcal{L}_{\alpha}(d^*))}\leq\textrm{dist}(\textbf{0},\partial \mathcal{L}_{\alpha}(d^k))\leq \tau\|z^{k+1}-z^{k}\|_2.
\end{align}
Then, the concavity of $\varphi$ yields
\begin{align}
    &\mathcal{L}_{\alpha}(d^k)-\mathcal{L}_{\alpha}(d^{k+1})\nonumber\\
    &\quad\quad=\mathcal{L}_{\alpha}(d^k)-\mathcal{L}_{\alpha}(d^*)
    -[\mathcal{L}_{\alpha}(d^{k+1})-\mathcal{L}_{\alpha}(d^*)]\nonumber\\
    &\quad\quad\leq\frac{\varphi[\mathcal{L}_{\alpha}(d^k)-\mathcal{L}_{\alpha}(d^*)]-\varphi[\mathcal{L}_{\alpha}(d^{k+1})-\mathcal{L}_{\alpha}(d^*)]}
    {\varphi'[\mathcal{L}_{\alpha}(d^k)-\mathcal{L}_{\alpha}(d^*)]}\nonumber\\
    &\quad\quad\leq\{\varphi[\mathcal{L}_{\alpha}(d^k)-\mathcal{L}_{\alpha}(d^*)]-\varphi[\mathcal{L}_{\alpha}(d^{k+1})-\mathcal{L}_{\alpha}(d^*)]\}\times\tau\|z^{k+1}-z^{k}\|_2.\nonumber
\end{align}
With Lemma \ref{descend}, we have
\begin{align}
   &\nu\|z^{k+1}-z^k\|_2^2\leq\{\varphi[\mathcal{L}_{\alpha}(d^k)-\mathcal{L}_{\alpha}(d^*)]-\varphi[\mathcal{L}_{\alpha}(d^{k+1})-\mathcal{L}_{\alpha}(d^*)]\}\times\tau \|z^{k+1}-z^{k}\|_2,\nonumber
\end{align}
which is equivalent to
\begin{align}
   &2\frac{\nu}{\tau}\|z^{k+1}-z^k\|_2\leq2\times\sqrt{\varphi[\mathcal{L}_{\alpha}(d^k)-\mathcal{L}_{\alpha}(d^*)]-\varphi[\mathcal{L}_{\alpha}(d^{k+1})-\mathcal{L}_{\alpha}(d^*)]}\times\sqrt{\frac{\nu}{\tau}}\sqrt{\|z^{k+1}-z^{k}\|_2}.
\end{align}
Using the Schwartz's inequality, we then derive that
\begin{align}
    &2\frac{\nu}{\tau}\|z^{k+1}-z^k\|_2\leq\{\varphi[\mathcal{L}_{\alpha}(d^k)-\mathcal{L}_{\alpha}(d^*)]-\varphi[\mathcal{L}_{\alpha}(d^{k+1})-\mathcal{L}_{\alpha}(d^*)]\}+\frac{\nu}{\tau}\|z^{k+1}-z^{k}\|_2.
\end{align}
That is also
\begin{align}\label{core}
    &\frac{\nu}{\tau}\|z^{k+1}-z^k\|_2\leq\varphi[\mathcal{L}_{\alpha}(d^k)-\mathcal{L}_{\alpha}(d^*)]-\varphi[\mathcal{L}_{\alpha}(d^{k+1})-\mathcal{L}_{\alpha}(d^*)].
\end{align}
Summing (\ref{core}) from $K$ to $K+j$ yields that
\begin{align}
   &\frac{\nu}{\tau} \sum_{k=K}^{K+j}\|z^{k+1}-z^k\|_2\leq \varphi[\mathcal{L}_{\alpha}(d^K)-\mathcal{L}_{\alpha}(d^*)]- \varphi[\mathcal{L}_{\alpha}(d^{K+j+1})-\mathcal{L}_{\alpha}(d^*)]\leq \varphi[\mathcal{L}_{\alpha}(d^K)-\mathcal{L}_{\alpha}(d^*)].
\end{align}
Letting $j\rightarrow+\infty$,  we have
\begin{equation}
   \frac{\nu}{\tau} \sum_{k=K}^{+\infty}\|z^{k+1}-z^k\|_2\leq\varphi[\mathcal{L}_{\alpha}(d^K)-\mathcal{L}_{\alpha}(d^*)]<+\infty.
\end{equation}
From Lemma \ref{station}, there exists a critical  point $(x^*,y^*,p^*)$ of $\mathcal{L}_{\alpha}(x,y,p)$. Then, $\{z^k\}_{k=0,1,2,\ldots}$ is convergent and $(x^*,y^*)$ is a stationary point of $\{z^k\}_{k=0,1,2,\ldots}$. That is to say  $\{z^k\}_{k=0,1,2,\ldots}$ converges to $(x^*,y^*)$. Note that $p^{k}$ is a linear composition of $x^k$  and $y^k$, thus $\{p^k\}_{k=0,1,2,\ldots}$ converges to $p^*$.
\end{proof}

\section{Applications and numerical results}
In this part, we consider   applying ILR-ADMM to problem (\ref{tvq}) for  image deblurring. This section contains two parts: in the first one, several
basic properties of the proposed algorithm, such as the convergence  and
the influence of the selection of the  parameter $q$ in problem (\ref{tvq}), are investigated;
in the second one, the proposed algorithm is compared with other classical
methods for image deblurring. We employ four images (see Figure 1), which include three nature
images, one MRI image  for our numerical experiments. The
performance of the deblurring algorithms is quantitatively measured by means of  the signal-to-noise ratio (SNR)
\begin{align}
\textrm{SNR}(u,u^*)=10*\log_{10}(\frac{\|u-\bar{u}\|^2}{\|u-u^*\|^2}),
\end{align}
where  $u$ and $u^*$ denote the original image and the restored image, respectively, and $\bar{u}$ represents the mean of the original image $u$.
\begin{figure}[htbp]
\centering
   \subfigure[]{
    \includegraphics[width=2.5in]{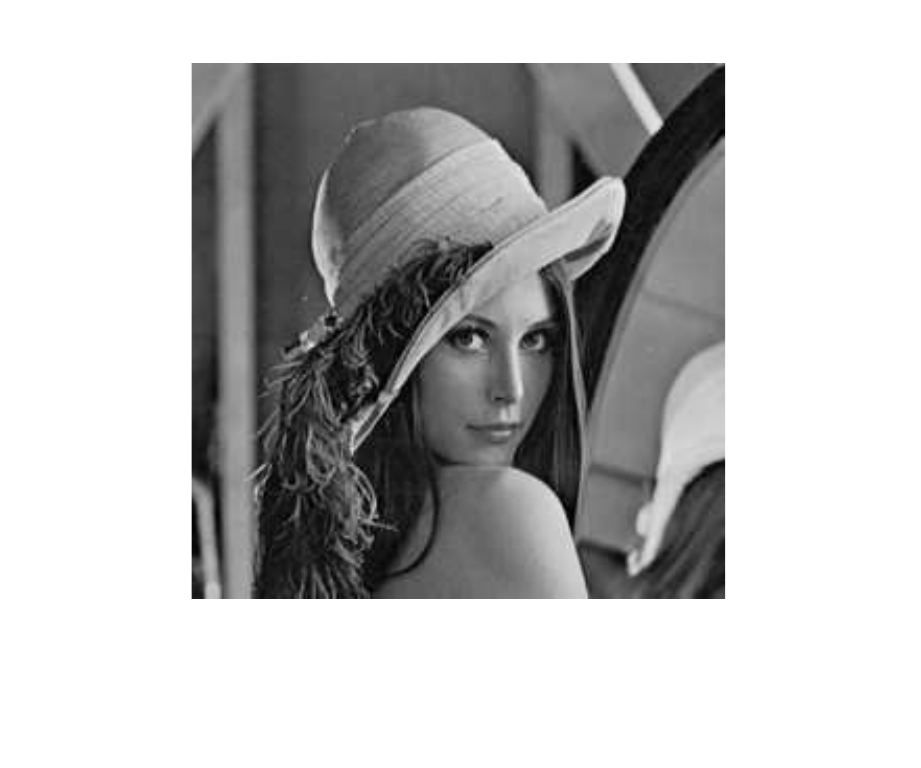}}
  \subfigure[]{
    \includegraphics[width=2.5in]{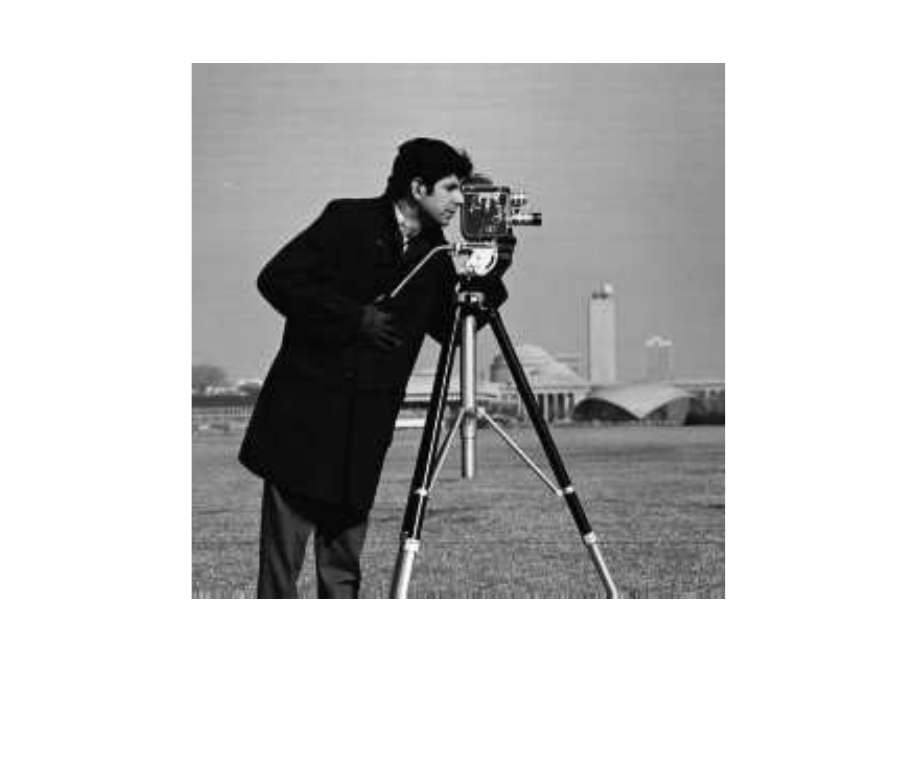}}
  \subfigure[]{
    \includegraphics[width=2.5in]{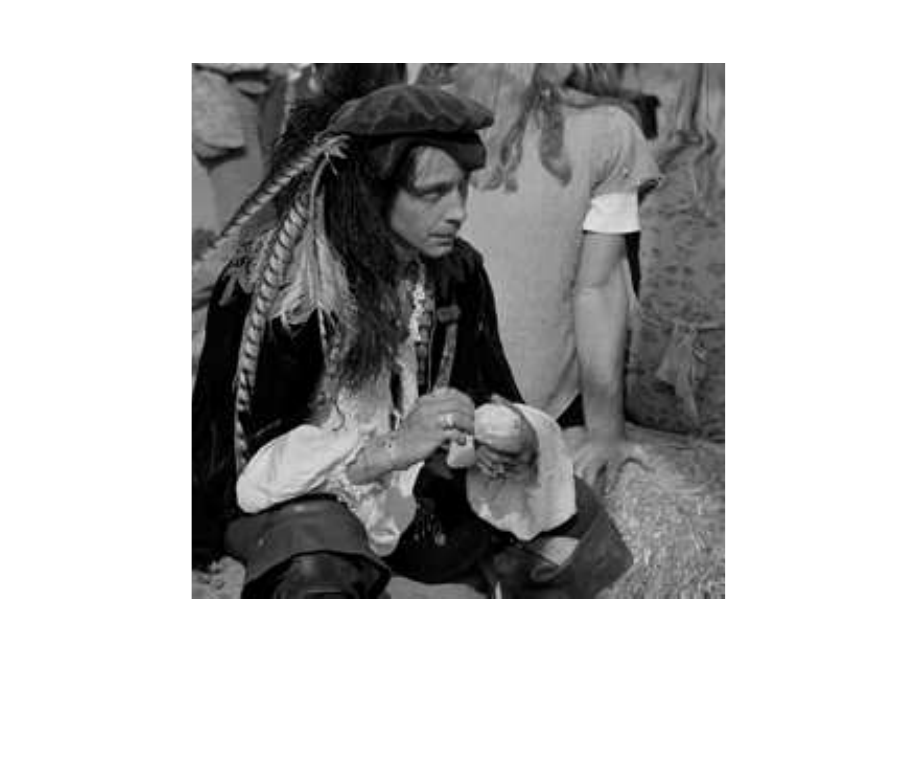}}
  \subfigure[]{
    \includegraphics[width=2.5in]{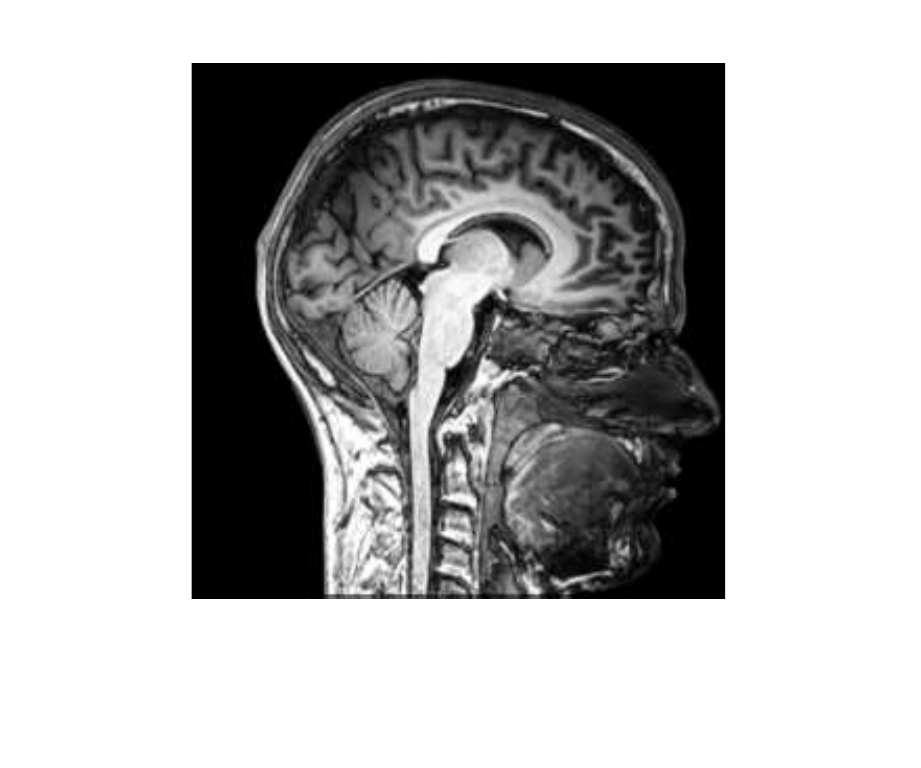}}
  \caption{Original images. (a) Lena ($256\times256$); (b) Cameraman ($256\times256$); (c) Orangeman ($256\times256$);
(d) Brain vessels ($256\times256$).}
\end{figure}
In the experiments, we use an increasing technique for the parameter $\alpha$ in ILR-ADMM, i.e., $\alpha=\min\{\rho\alpha, \alpha_{\max}\}$, with  $\rho=1.05$ and upper bound $\alpha_{\max}=10^3$. Such a technique has been used for ADMM in \cite{lu2017nonconvex}. From (\ref{conpara}), we need $r>\alpha$; and it is set as $r=\alpha+10^{-6}$. For all the algorithms used in this section, the initialization is the blurred image.
\subsection{Performance of ILR-ADMM}
In this subsection, we focus on the convergence of ILR-ADMM. The blurring operator used for our experiments are generated
by the matlab command \verb"fspecial('gaussian',17,5)". And in the problem (\ref{tvq}), we choose $\varepsilon=10^{-7}$.
 For $q=0.2,0.4,0.6,0.8$, we apply  ILR-ADMM to reconstruct the four images in Figure 1. We run all the algorithms  10 times in each case and then take the average.  Fig. 2 shows the SNR versus the iterations and the maximum iteration is 200.
\begin{figure}[htbp]
\centering
   \subfigure[]{
    \includegraphics[width=2.5in]{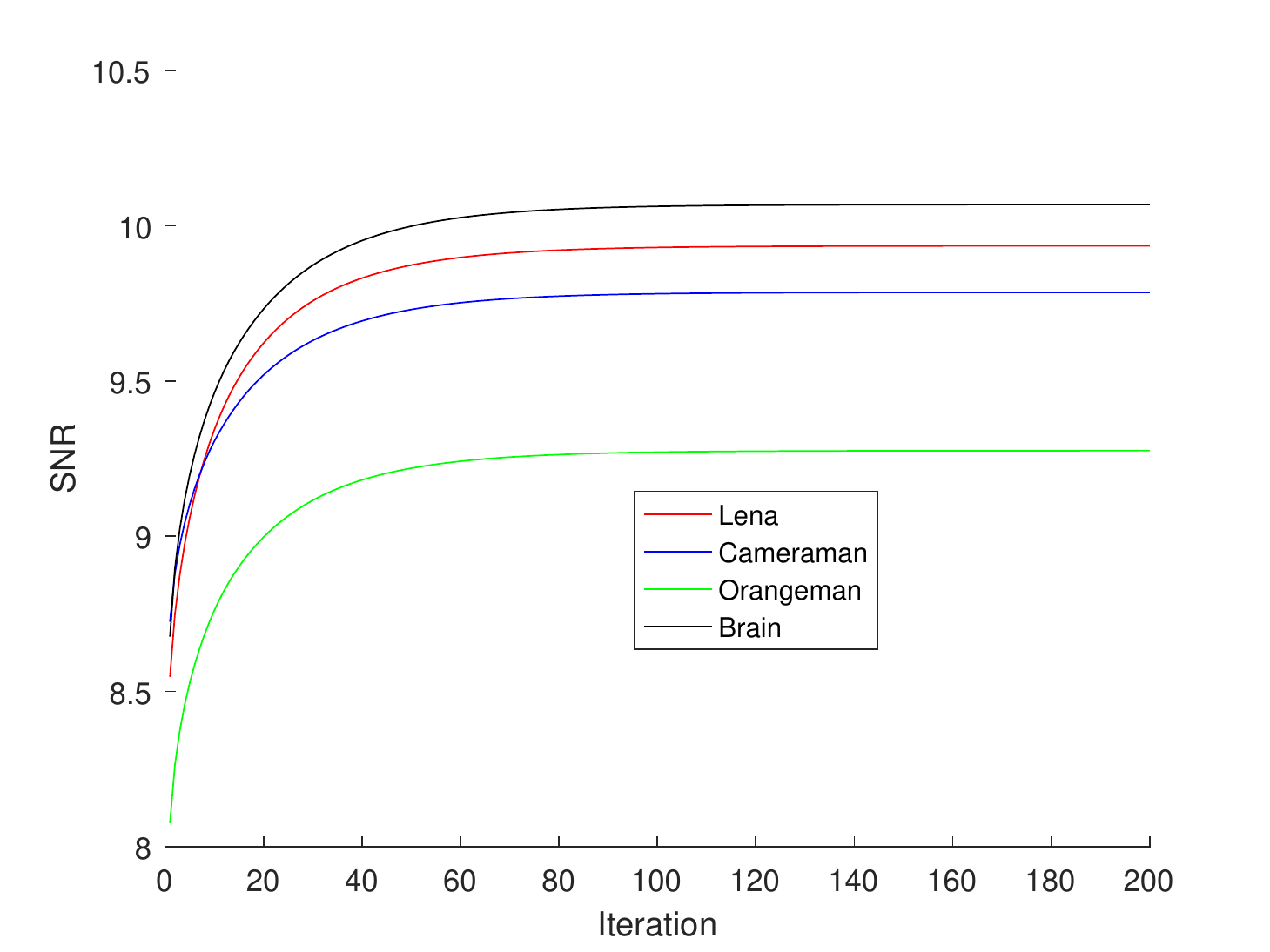}}
  \subfigure[]{
    \includegraphics[width=2.5in]{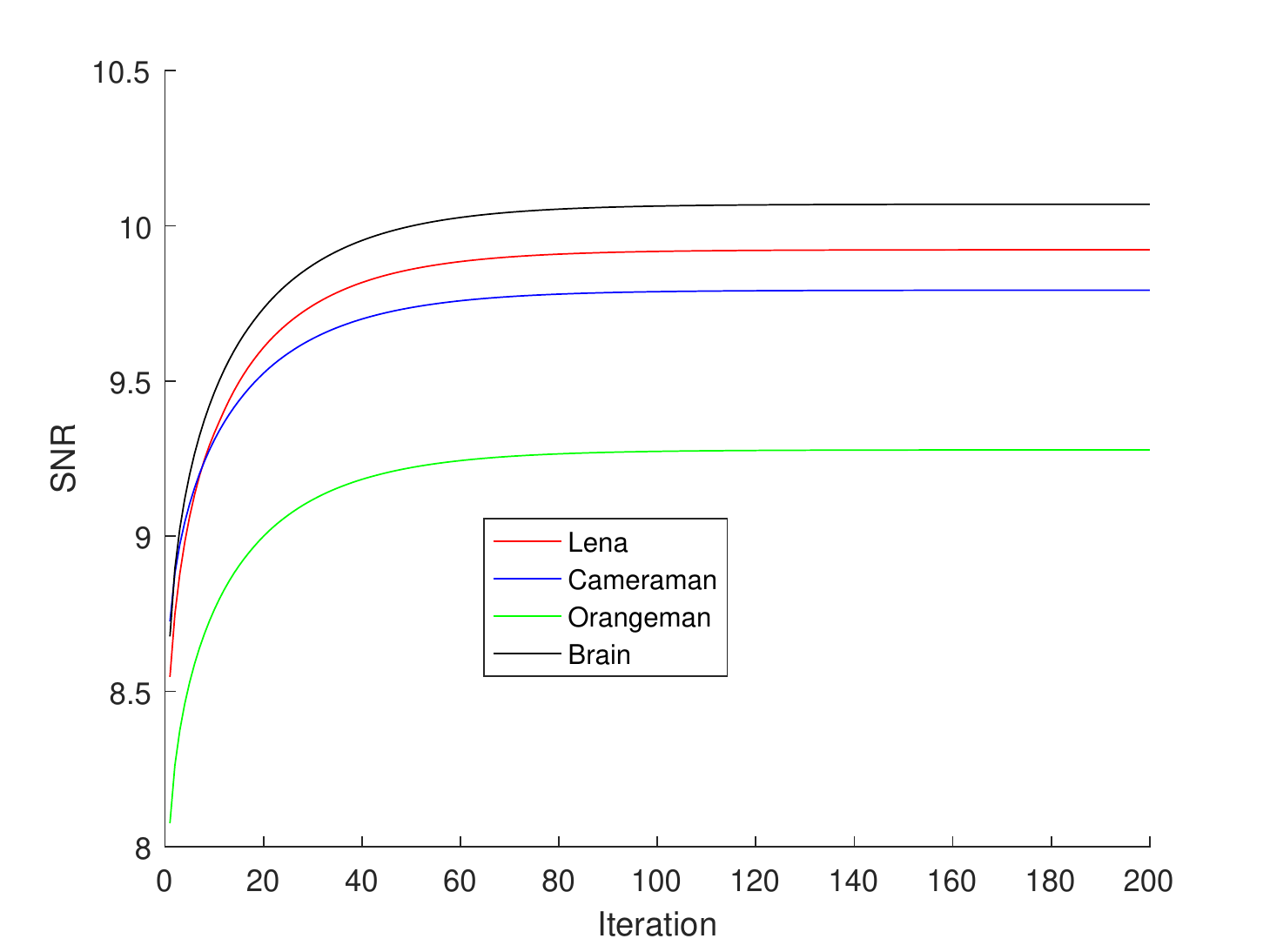}}
  \subfigure[]{
    \includegraphics[width=2.5in]{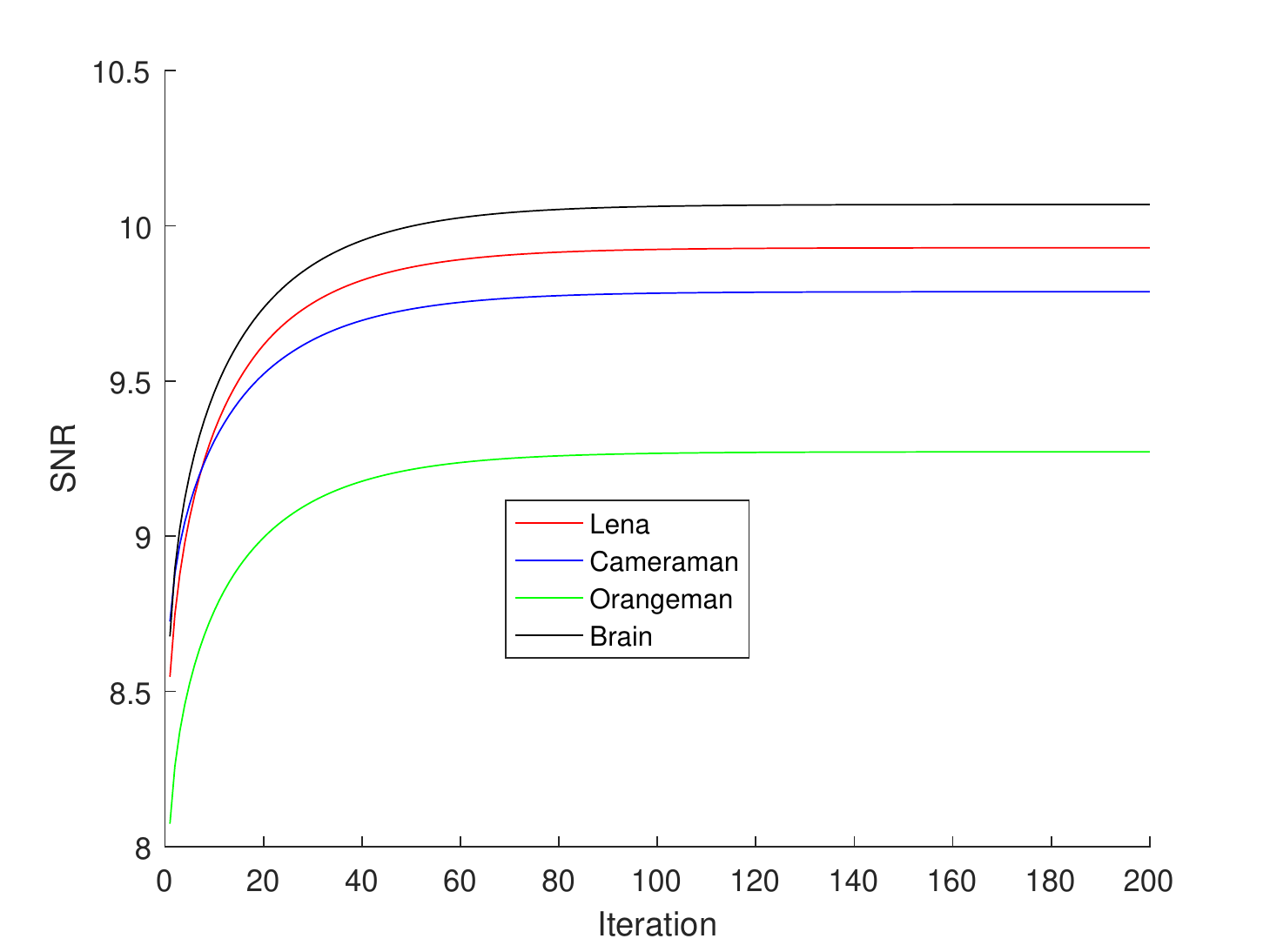}}
  \subfigure[]{
    \includegraphics[width=2.5in]{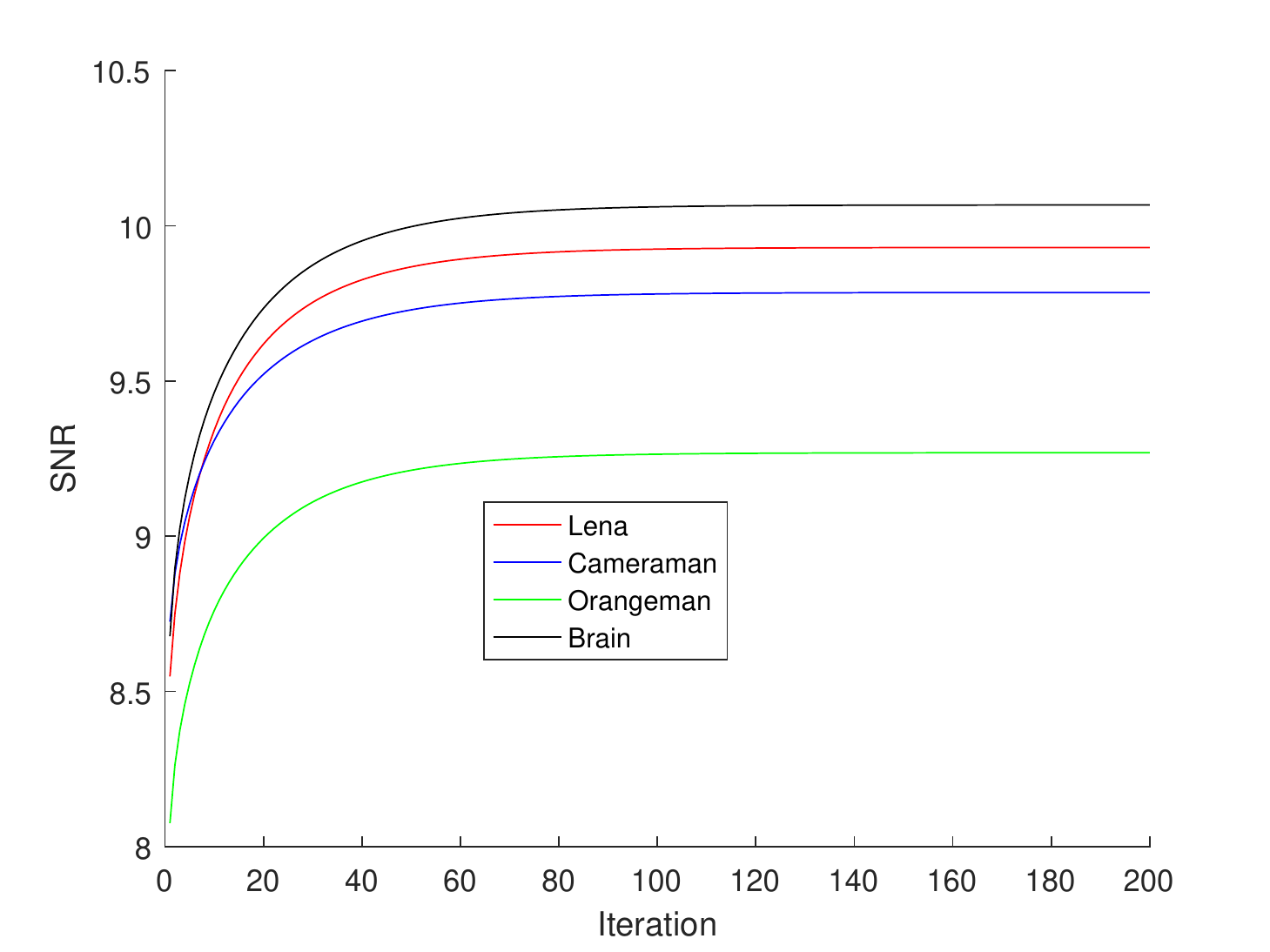}}
  \caption{SNR versus the iterations for parameters. (a) $q=0.2$; (b)$q=0.4$; (c) $q=0.6$;
(d) $q=0.8$.}
\end{figure}
\subsection{Comparisons with other classical methods}
This subsection focuses on $q=\frac{1}{2}$ for problem (\ref{tvq}). We consider two algorithms for comparisons: the first one is the direct nonconvex ADMM; the second one considers  using an inner loop  for the subproblem. Precisely, the inner loop is constructed by the proximal reweighted algorithm. And in the numerical examples, the inner loop is set as 10. We call this algorithm as in-loop-ADMM.

The blurring operator $\Psi$ is generated  by the Matlab commands  \verb"fspecial('gaussian',.,.)". The parameter is set as $\sigma=10^{-4}$, $e$ is generated  by the Gaussian noise $\mathcal{N}(0,0.01)$.  Fig. 3, 4, 5 and 6 present the reconstructed images  with different algorithms for the four images in Fig. 1. The  maximum iteration is set as 200. We run all algorithms ten times and take the average. The time cost (T) is also reported. We also plot the SNR versus the iteration  for different algorithms in each image. From numerical results, we can see ILR-ADMM perform better than the nonconvex ADMM with almost same time cost. Due to that in-loop-ADMM employs the inner loop, the time cost is much larger than   ILR-ADMM. In general, ILR-ADMM  outperforms than the other two algorithms.
\begin{figure}
  \centering
  \subfigure[Blurred and noised image]{
    \includegraphics[width=2.5in]{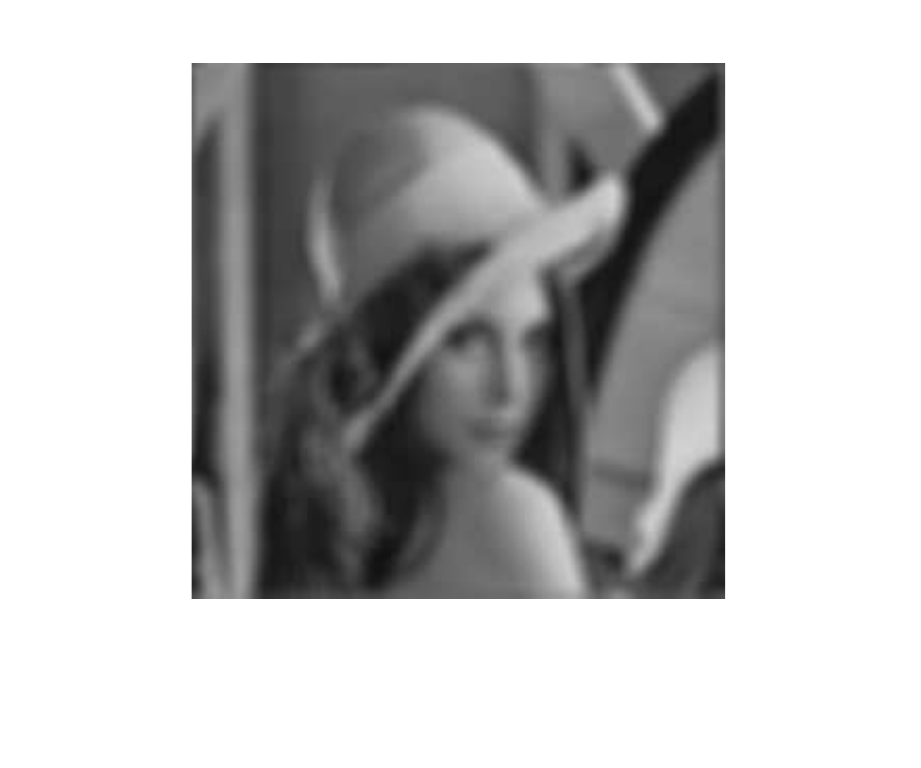}}
  \subfigure[Recovery by ILR-ADMM, SNR=11.73dB, T=2.1s]{
    \includegraphics[width=2.5in]{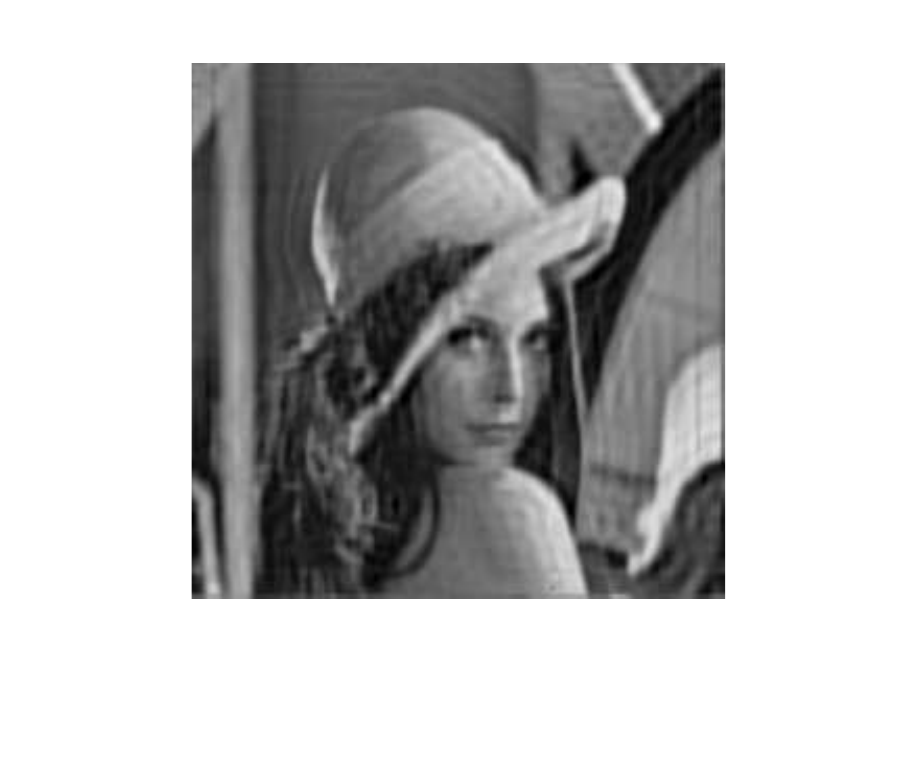}}
  \subfigure[Recovery by nonconvex ADMM, SNR=11.65dB, T=2.3s]{
    \includegraphics[width=2.5in]{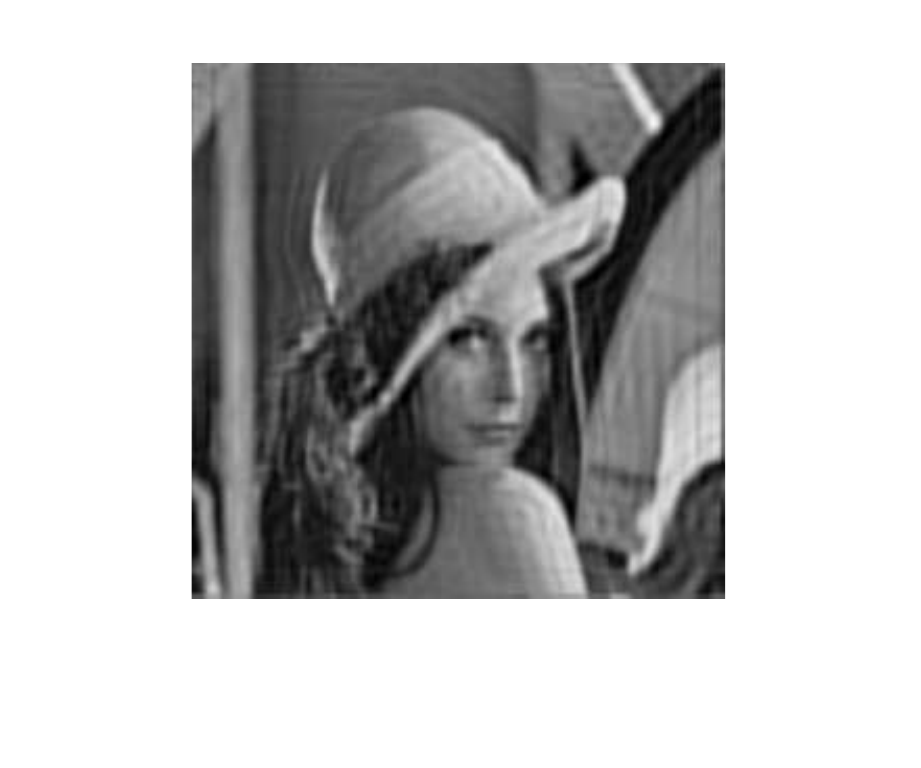}}
      \subfigure[Recovery by in-loop-ADMM, SNR=11.35dB, T=25.2s ]{
    \includegraphics[width=2.5in]{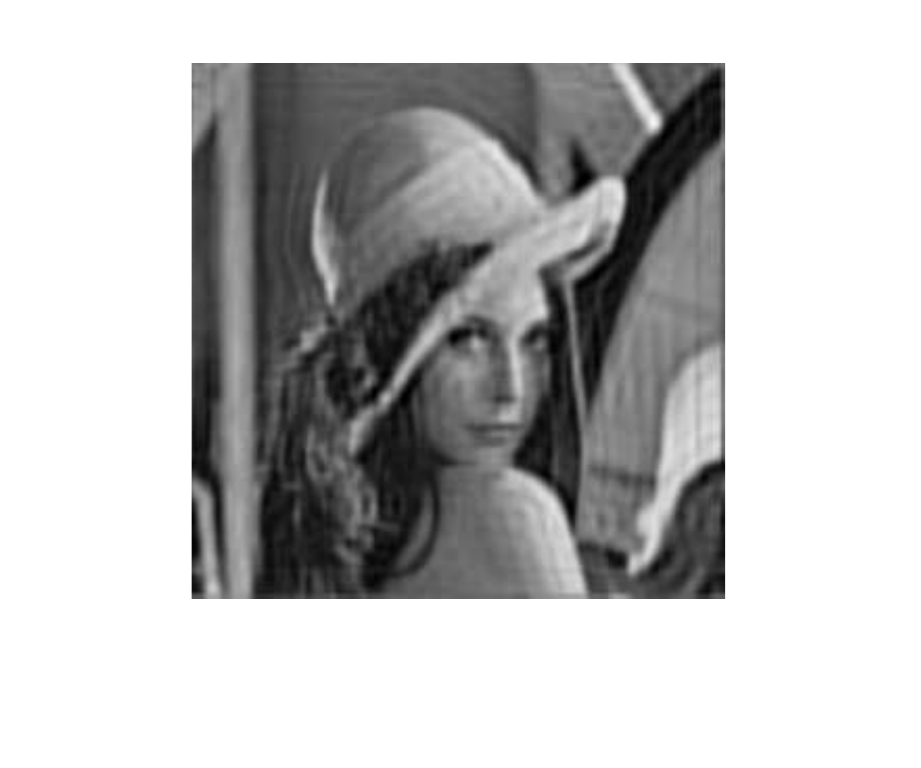}}
      \subfigure[SNR versus the iteration  for different algorithms]{
    \includegraphics[width=2.5in]{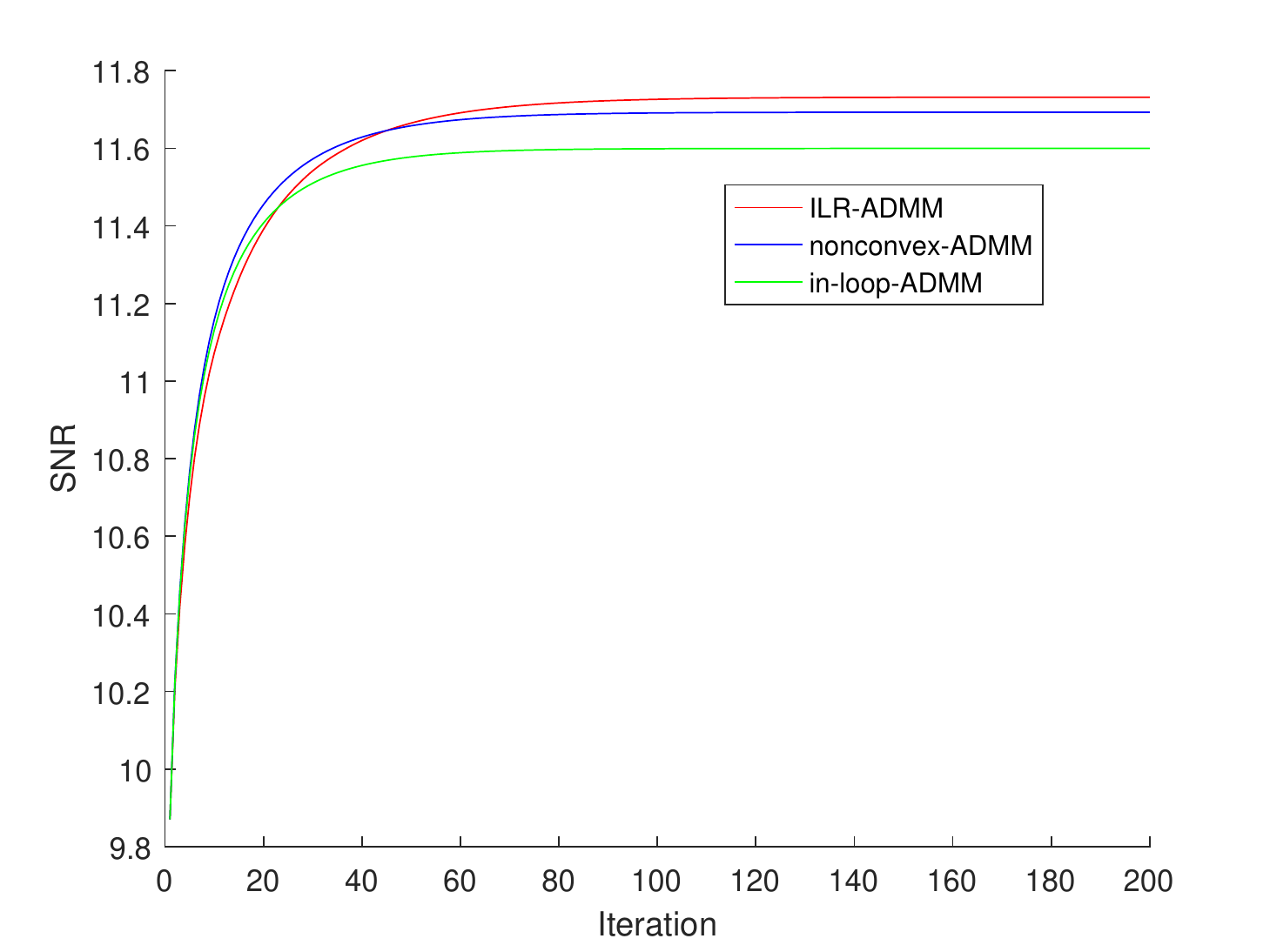}}
\caption{Reconstructed  images  by different methods for ``Lena" image}
\end{figure}

\begin{figure}
  \centering
  \subfigure[Blurred and noised image]{
    \includegraphics[width=2.5in]{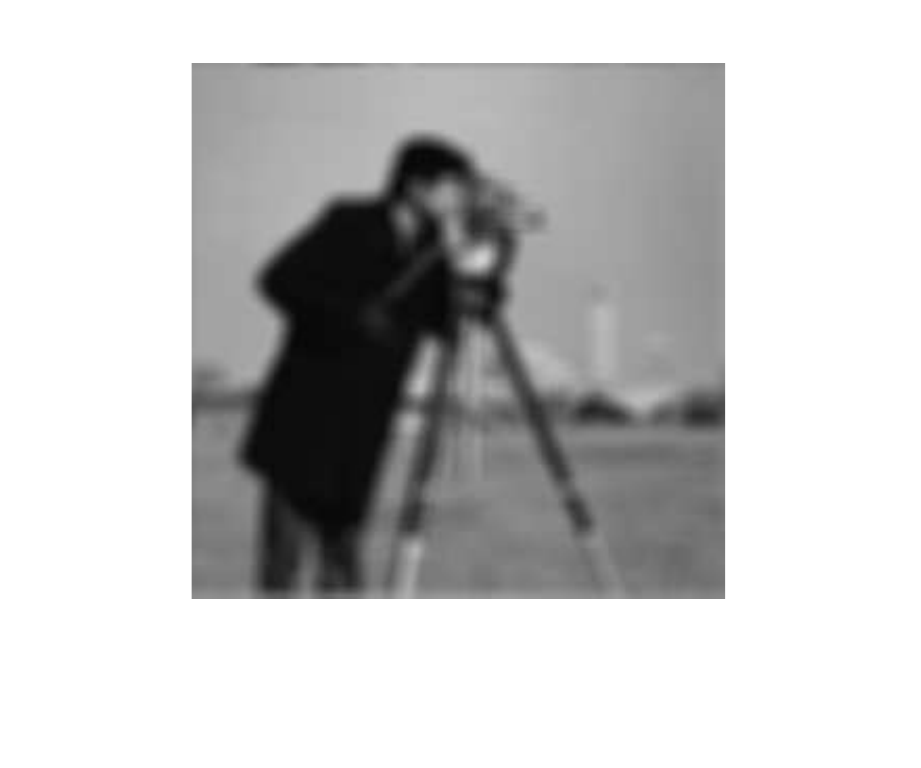}}
  \subfigure[Recovery by ILR-ADMM, SNR=11.53dB, T=2.0s]{
    \includegraphics[width=2.5in]{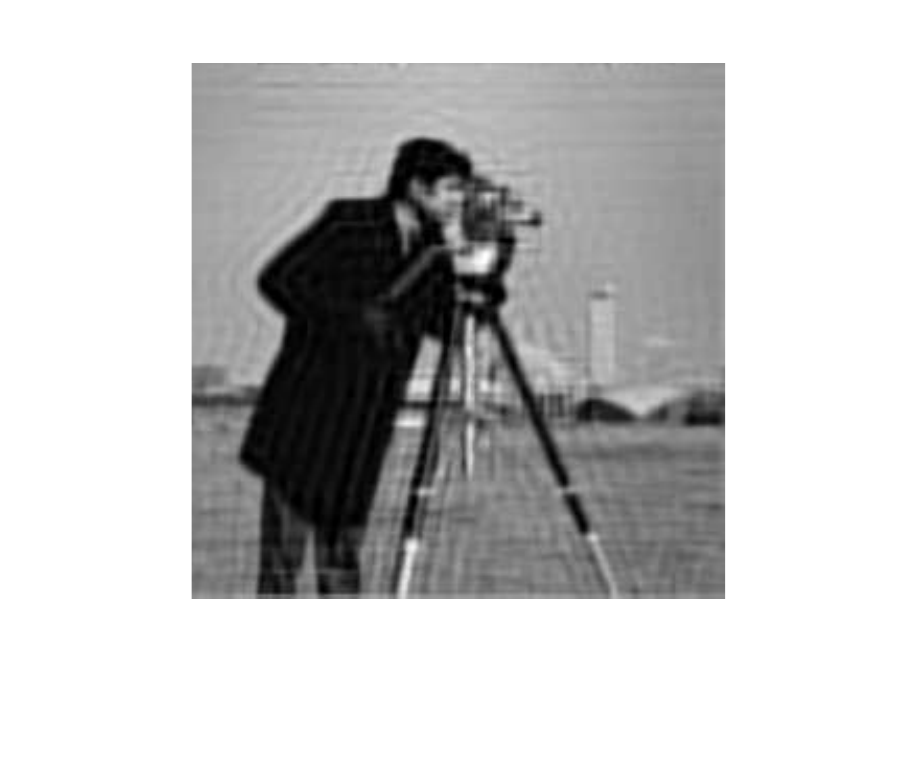}}
  \subfigure[Recovery by nonconvex ADMM, SNR=11.45dB, T=1.9s]{
    \includegraphics[width=2.5in]{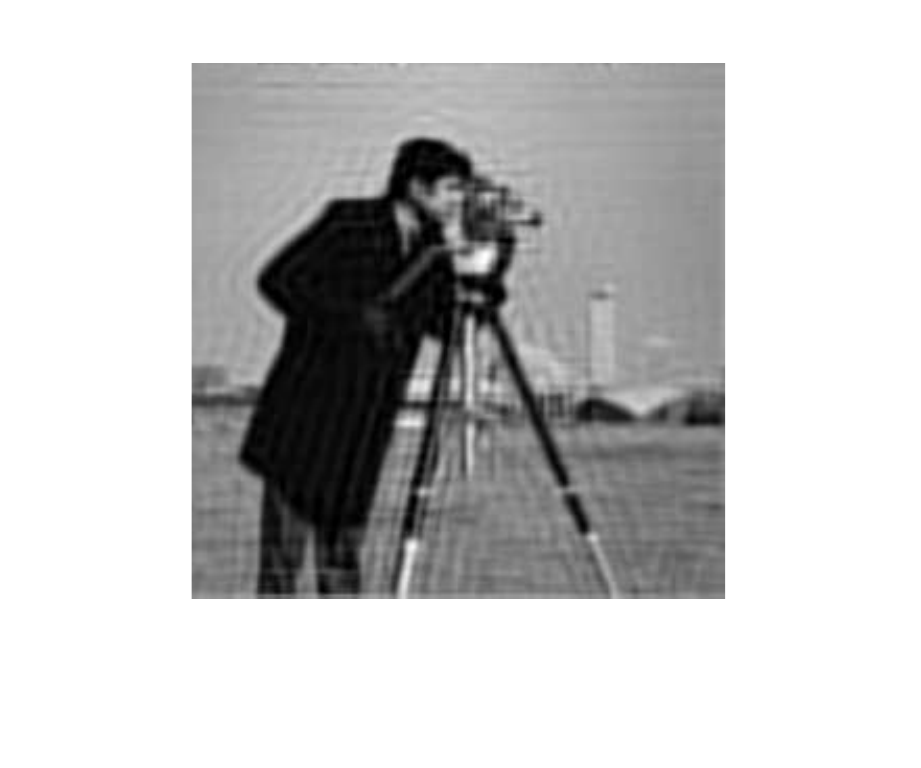}}
      \subfigure[Recovery by in-loop-ADMM, SNR=11.39dB, T=22.3s ]{
    \includegraphics[width=2.5in]{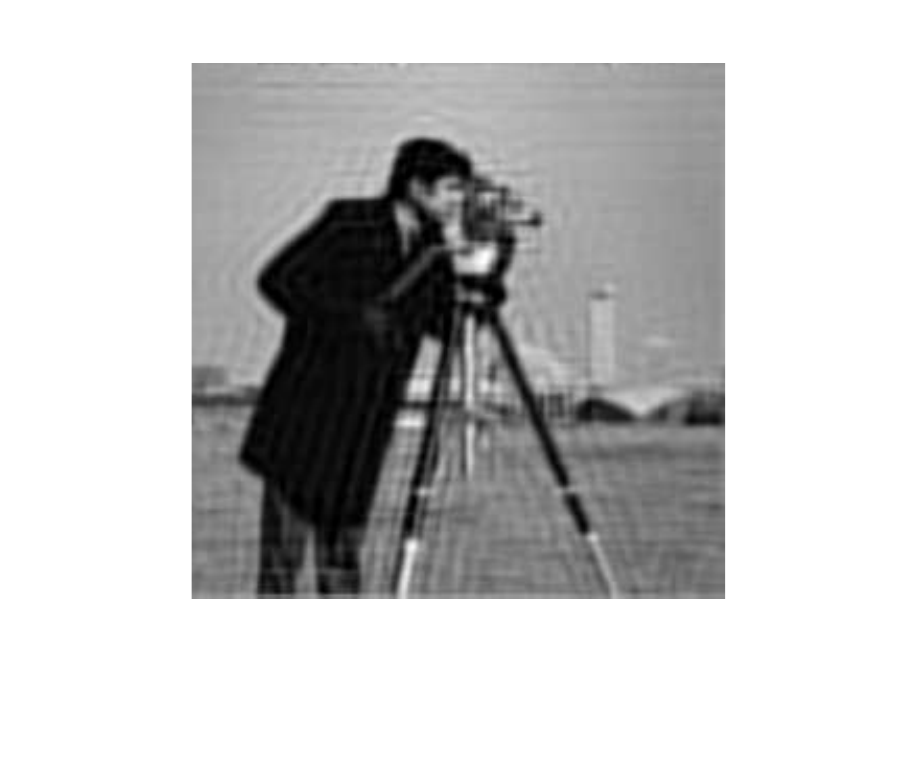}}
      \subfigure[SNR versus the iteration  for different algorithms]{
    \includegraphics[width=2.5in]{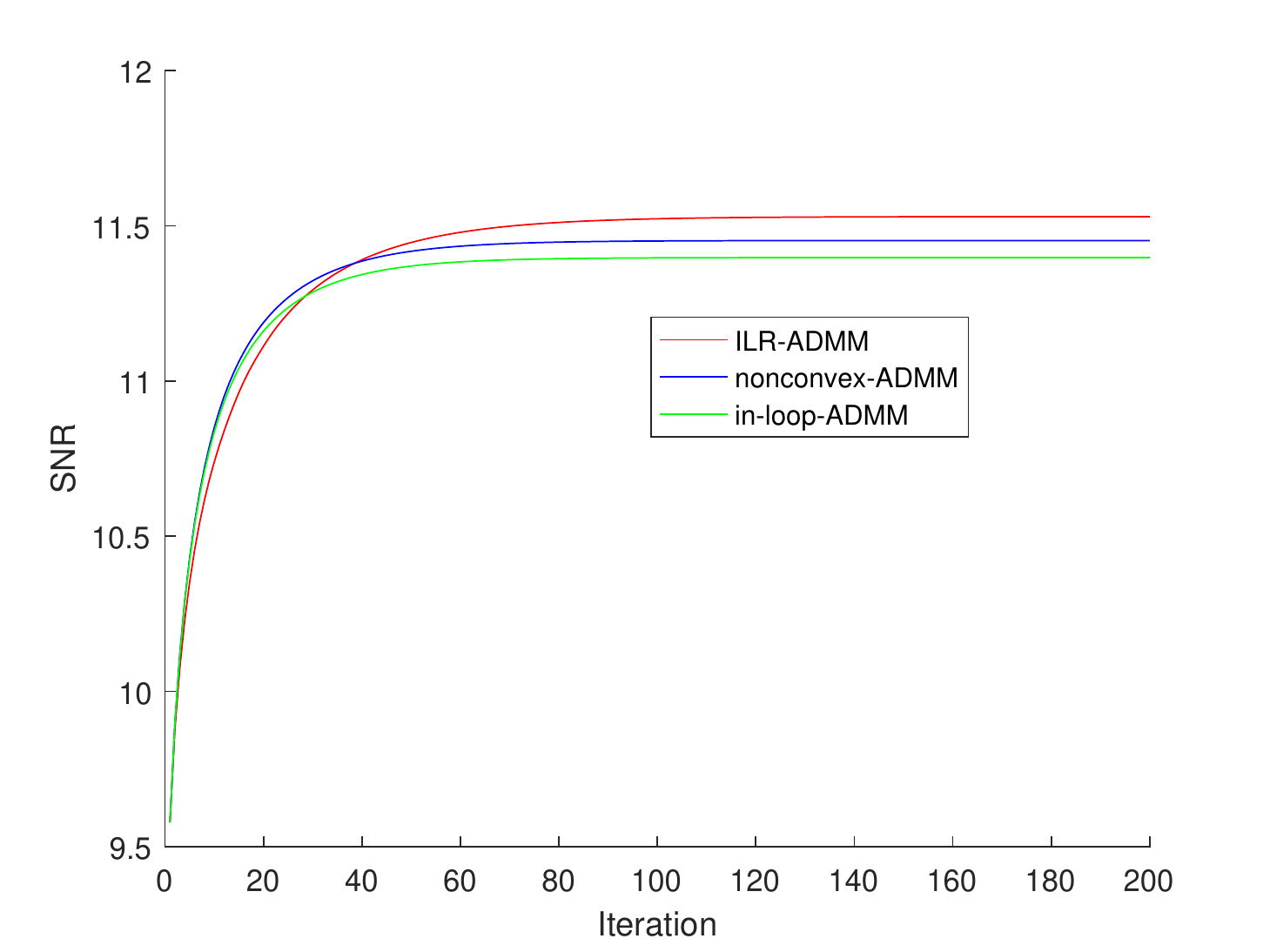}}
\caption{Reconstructed  images  by different methods for ``Cameraman" image}
\end{figure}

\begin{figure}
  \centering
  \subfigure[Blurred and noised image]{
    \includegraphics[width=2.5in]{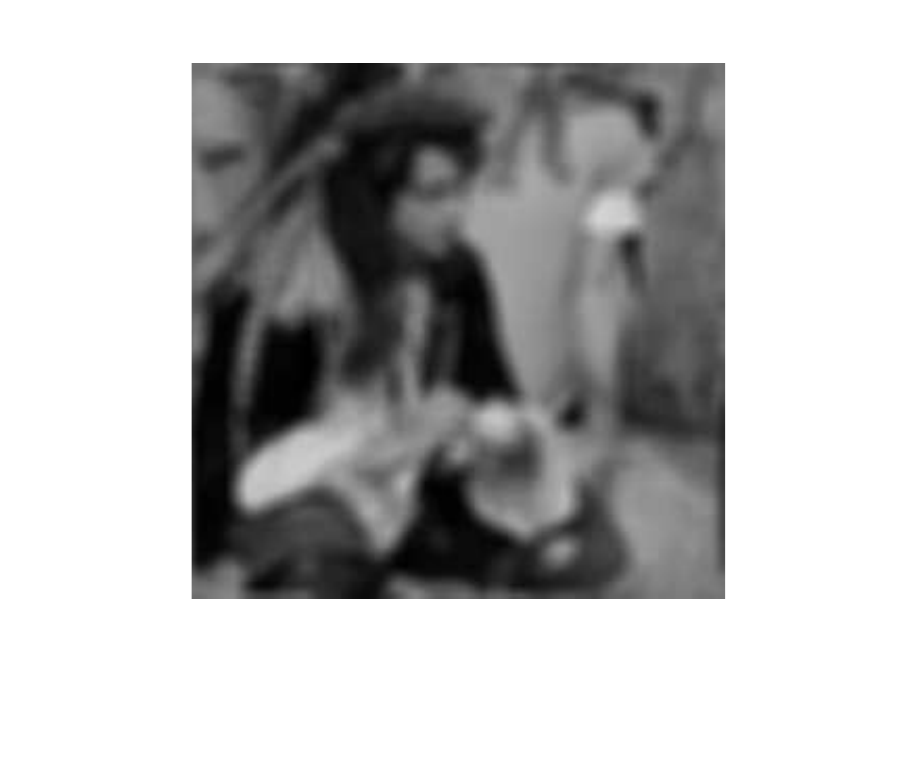}}
  \subfigure[Recovery by ILR-ADMM, SNR=11.25dB, T=2.1s]{
    \includegraphics[width=2.5in]{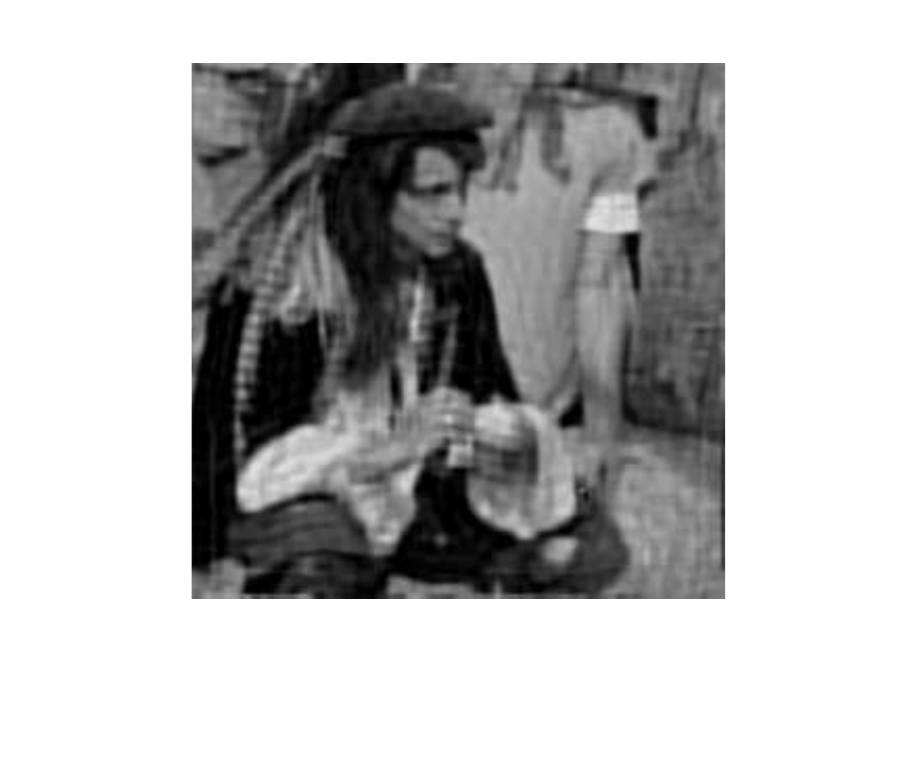}}
  \subfigure[Recovery by nonconvex ADMM, SNR=11.13dB, T=2.2s]{
    \includegraphics[width=2.5in]{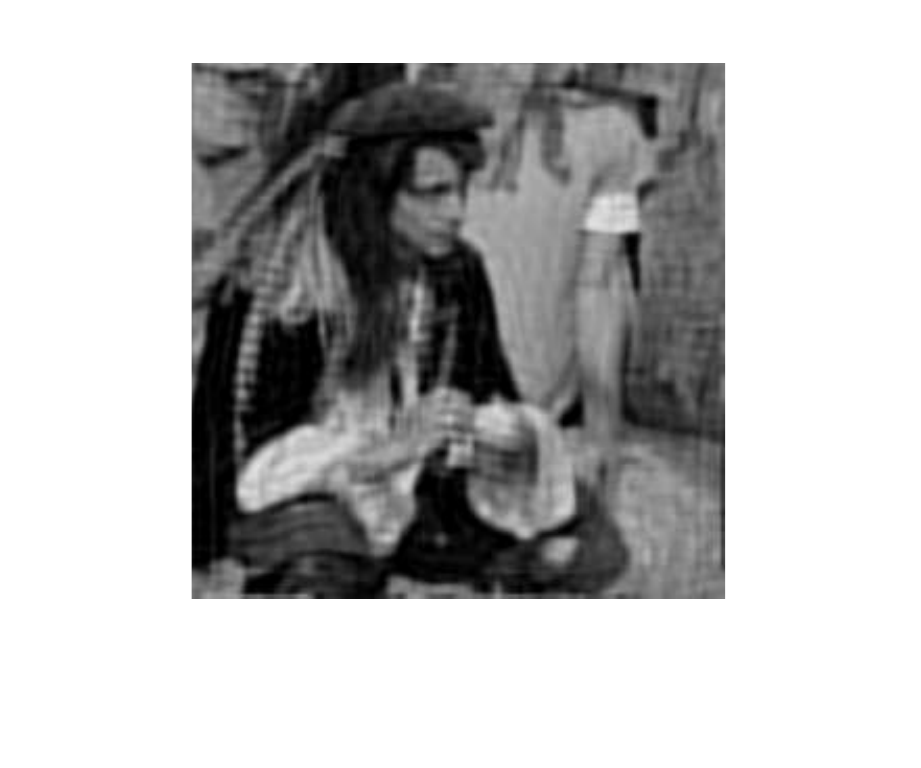}}
      \subfigure[Recovery by in-loop-ADMM, SNR=11.07dB, T=21.4s ]{
    \includegraphics[width=2.5in]{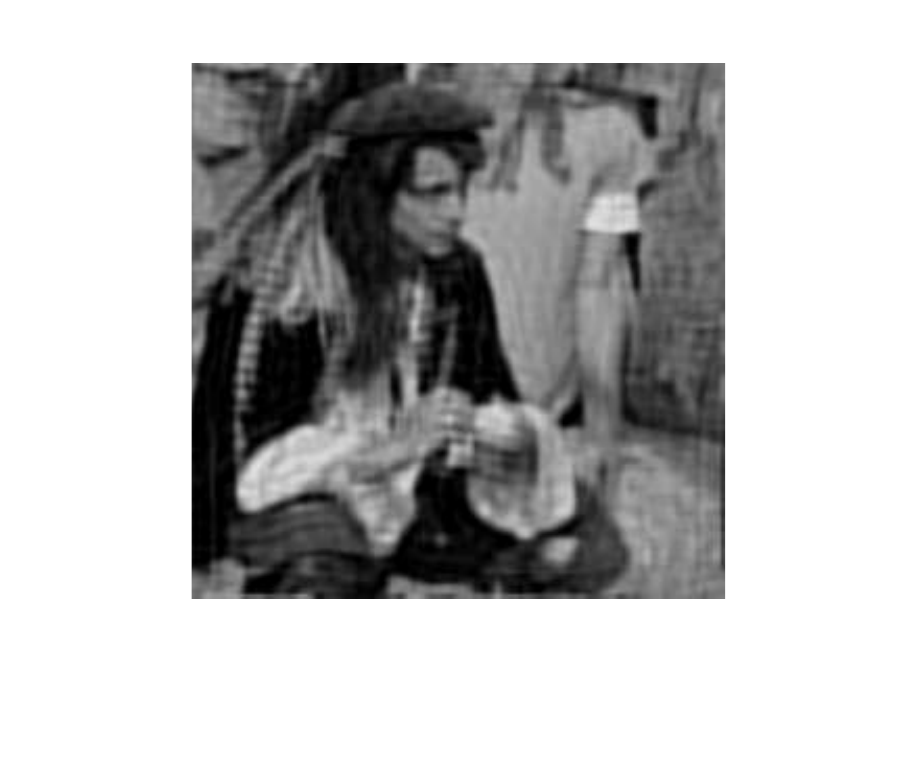}}
      \subfigure[SNR versus the iteration  for different algorithms]{
    \includegraphics[width=2.5in]{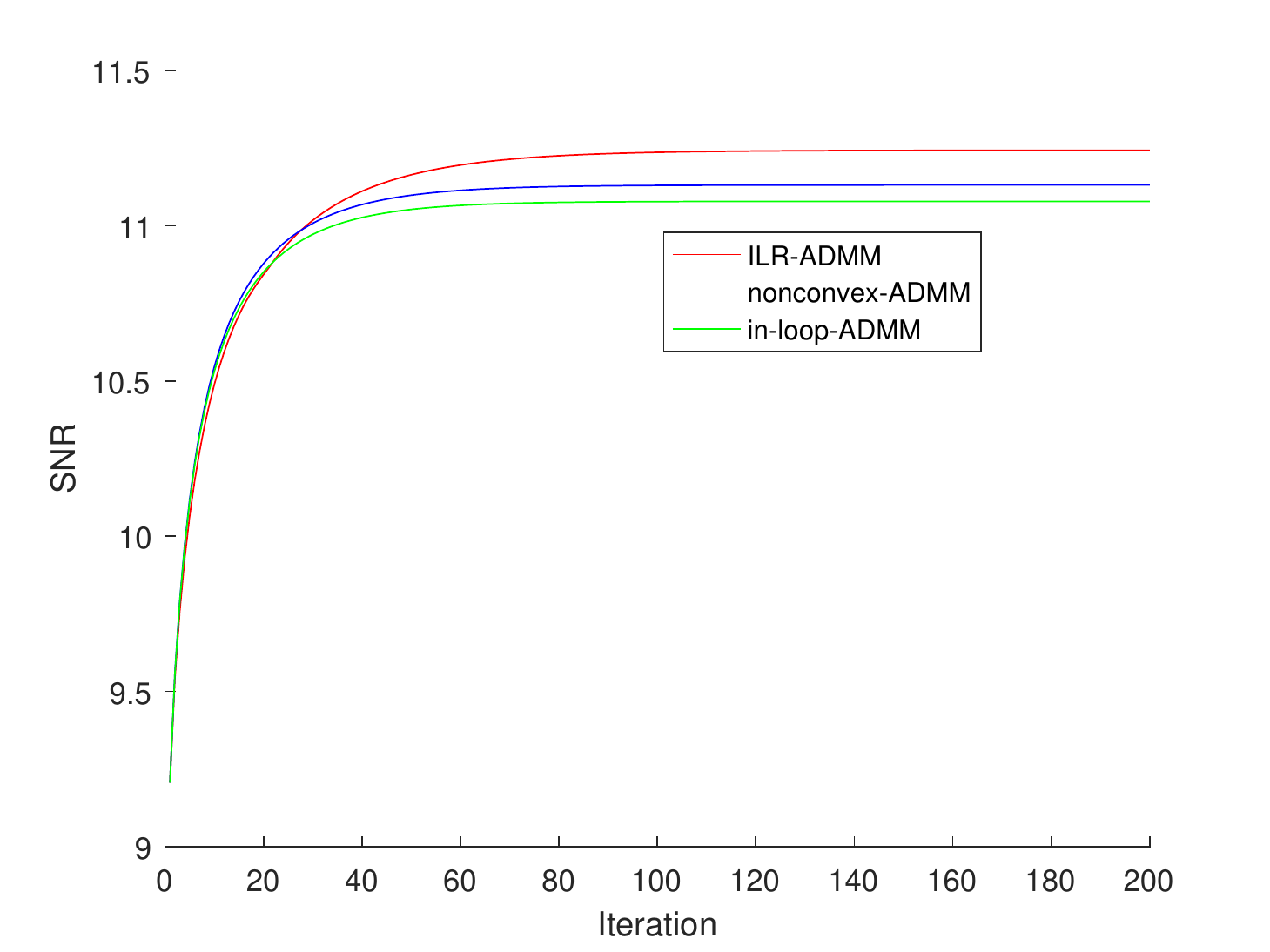}}
\caption{Reconstructed  images  by different methods for ``Cameraman" image}
\end{figure}

\begin{figure}
  \centering
  \subfigure[Blurred and noised image]{
    \includegraphics[width=2.5in]{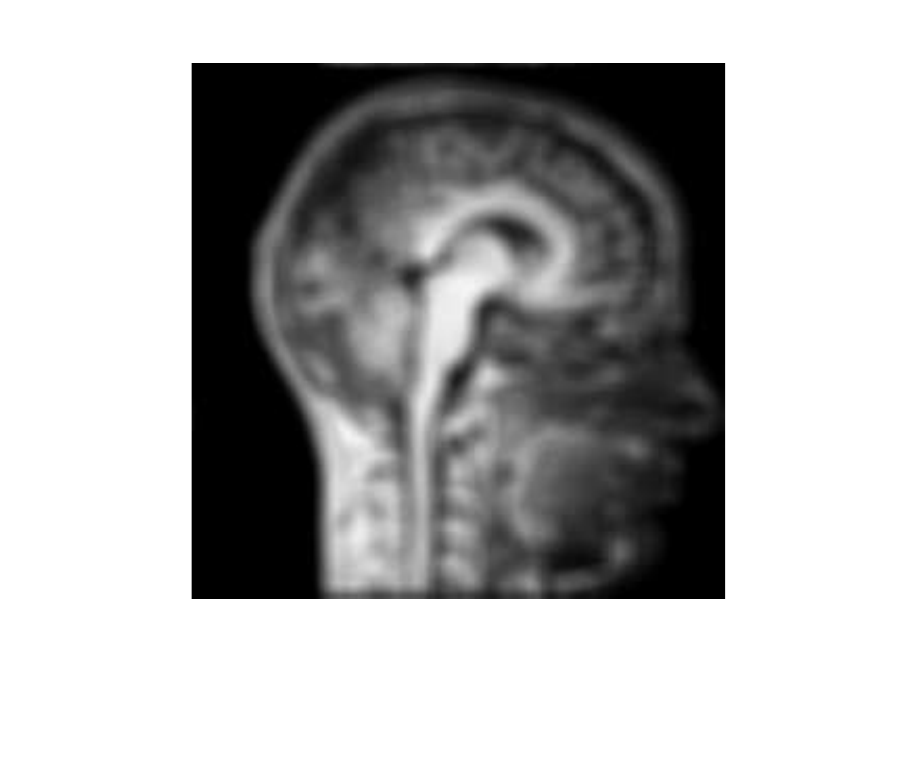}}
  \subfigure[Recovery by ILR-ADMM, SNR=12.53dB, T=2.0s]{
    \includegraphics[width=2.5in]{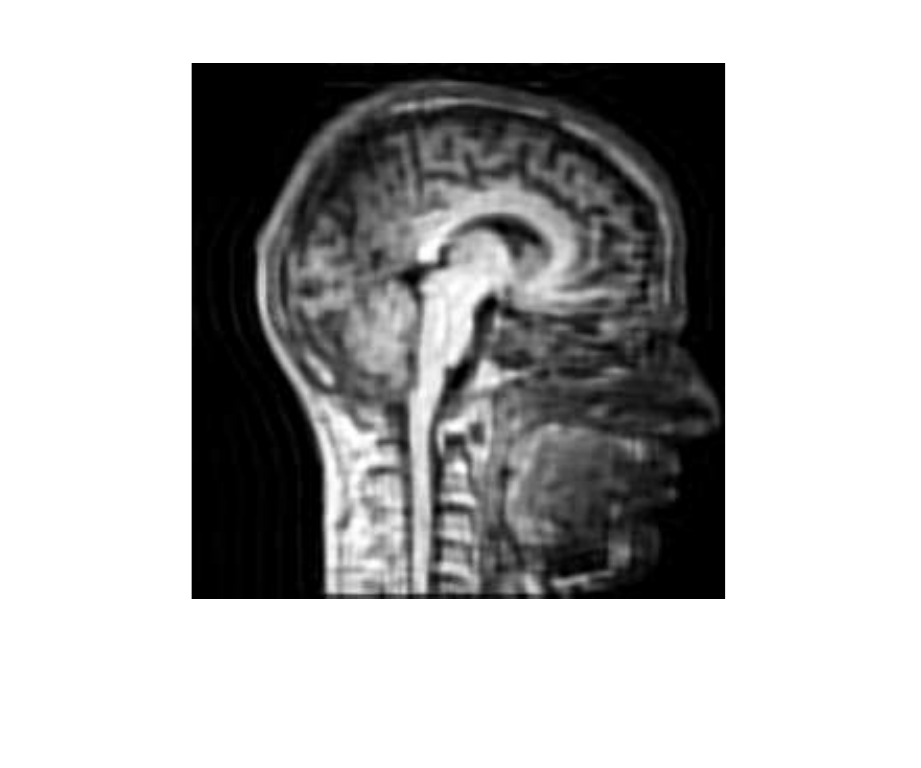}}
  \subfigure[Recovery by nonconvex ADMM, SNR=12.39dB, T=2.2s]{
    \includegraphics[width=2.5in]{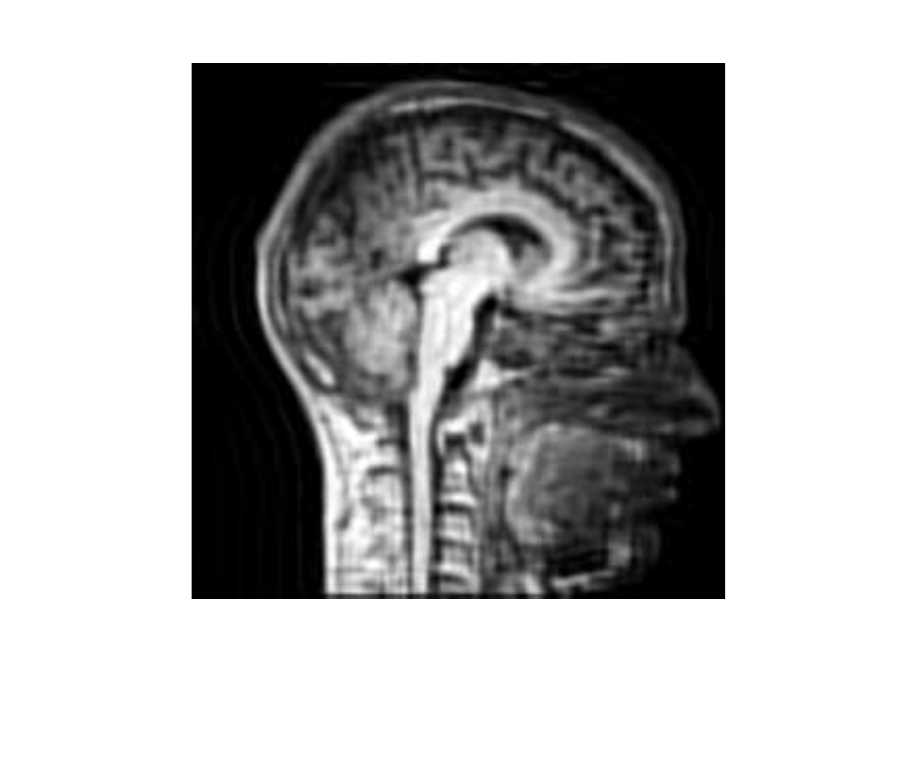}}
      \subfigure[Recovery by in-loop-ADMM, SNR=12.35dB, T=23.7s]{
    \includegraphics[width=2.5in]{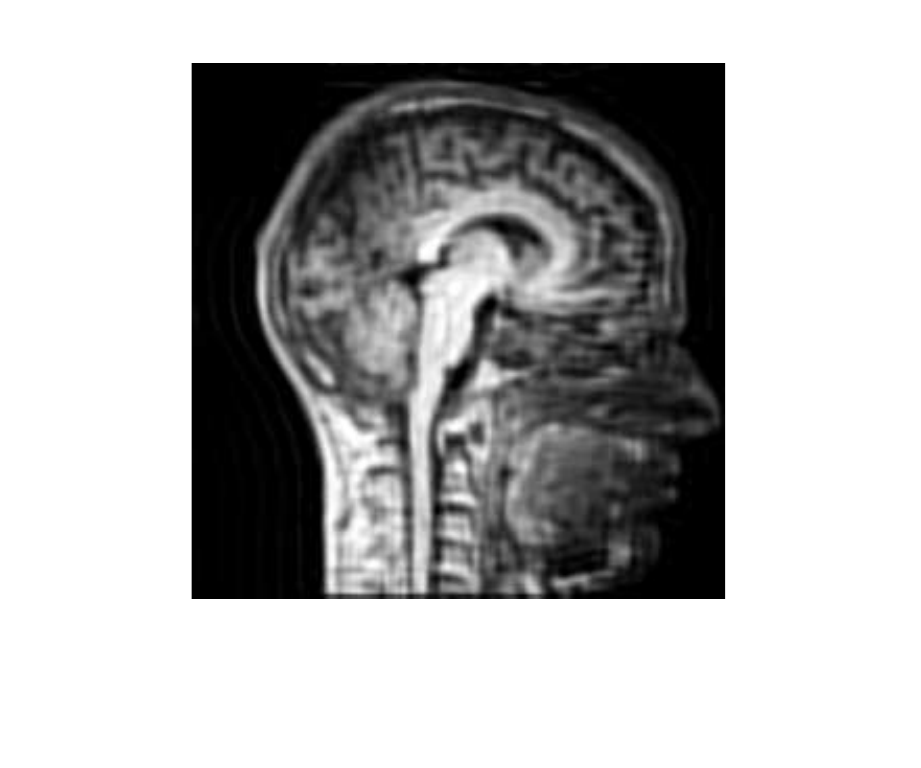}}
      \subfigure[SNR versus the iteration  for different algorithms]{
    \includegraphics[width=2.5in]{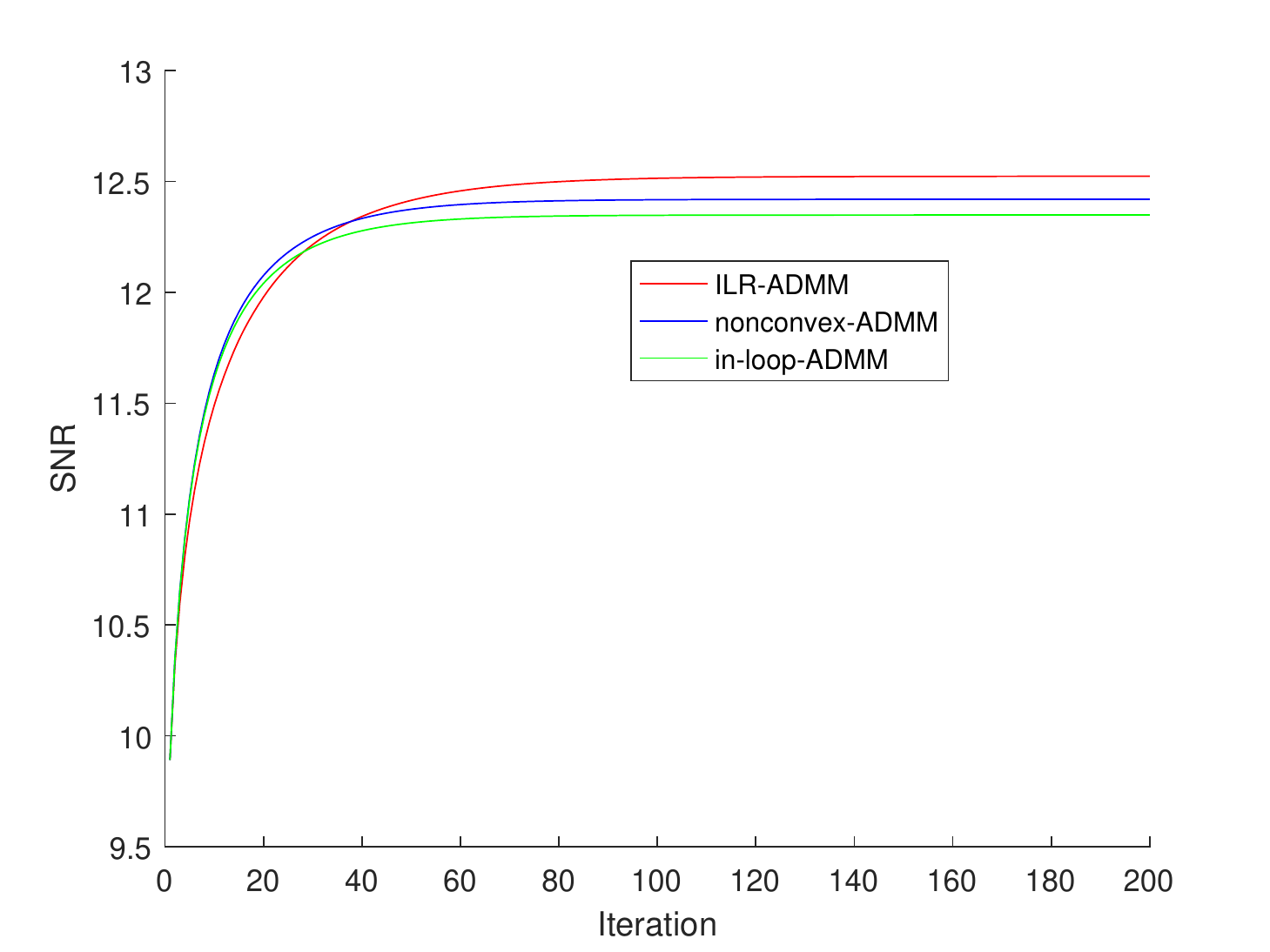}}
\caption{Reconstructed  images  by different methods for ``Brain" image}
\end{figure}
\section{Conclusion}
In this paper, we consider a class of nonconvex and nonsmooth minimizations with linear constraints which have applications in signal processing and machine learning research. The classical ADMM method for these problems always encounters both computational and mathematical barriers in solving the subproblem. We   combined the reweighted algorithm and linearized techniques, and then designed a new ADMM.
In the proposed algorithm, each subproblem just needs to calculate the proximal maps. The convergence is proved under several assumptions on the parameters and functions. And numerical results demonstrate the efficiency of our algorithm.

\end{document}